\documentclass[12pt]{article}
\usepackage{amsthm,amsmath,amssymb,latexsym,amscd}
\setlength{\textwidth}{16cm}
\setlength{\textheight}{23cm}
\setlength{\oddsidemargin}{0cm}
\setlength{\topmargin}{-1cm}

\newcommand{\g}{\frak{g}}
\newcommand{\h}{\frak{h}}
\renewcommand{\L}{\frak{L}}

\newcommand{\s}{\mathbf{s}}

\newcommand{\A}{\mathcal{A}}
\newcommand{\F}{\mathcal{F}}
\newcommand{\D}{\mathcal{D}}

\renewcommand{\O}{\mathcal{O}}
\renewcommand{\P}{\mathcal{P}}
\newcommand{\Q}{\mathcal{Q}}
\renewcommand{\S}{\mathcal{S}}
\newcommand{\T}{\mathcal{T}}

\newcommand{\Hom}{\mathrm{Hom}}

\newcommand{\Ker}{\mathrm{Ker}}
\renewcommand{\Im}{\mathrm{Im}}

\newcommand{\DC}{\mathcal{DC}}
\newcommand{\DR}{\mathcal{DR}}

\newcommand{\Com}{\mathcal{C}om}

\newcommand{\Lie}{\mathcal{L}ie}
\newcommand{\LL}{\mathcal{L}\mathcal{L}}
\newcommand{\Leib}{\mathcal{L}eib}

\renewcommand{\d}{\mathbf{d}}
\newcommand{\p}{\prime}

\newcommand{\pa}{\partial}
\renewcommand{\c}{\circ}
\newcommand{\ot}{\otimes}
\newcommand{\ti}{\tilde}

\newtheorem{definition}{Definition}[section]
\newtheorem{lemma}[definition]{Lemma}
\newtheorem{claim}[definition]{Claim}
\newtheorem{proposition}[definition]{Proposition}
\newtheorem{theorem}[definition]{Theorem}
\newtheorem{corollary}[definition]{Corollary}
\newtheorem{remark}[definition]{Remark}

\date{}

\begin{document}

\title{
Derived bracket construction and 
anti-cyclic subcomplex of Leibniz (co)homology complex
}
\author{K. UCHINO}
\maketitle
\abstract
{
An arbitrary Leibniz algebra can be embedded in
a differential graded Lie algebra
via the derived bracket construction.
Such an embedding is called a derived bracket representation.
We will construct the universal version
of the derived bracket representation,
prove that the principal part of the
target dg Lie algebra defines a subcomplex of Leibniz
(co)homology complex
and that the existence of the subcomplex
is a reflection of the anti-cyclicity of
the Leibniz operad.
}
%\tableofcontents
%%%%%%%%%%%%%%%%%%%%%%
\section{Introduction}
%%%%%%%%%%%%%%%%%%%%%%

Leibniz algebras are vector spaces equipped with
binary bracket products satisfying the Leibniz
identity.
The notion of Leibniz algebra was introduced
by Jean-Louis Loday,
motivated by the study of algebraic K-theory.
Hence the Leibniz algebras are sometimes called
the Loday algebras.
Today it is widely known that
Leibniz algebras arise in many areas of mathematics
not only K-theory.
To study Leibniz algebras geometrically
the derived bracket construction
of Kosmann-Schwarzbach \cite{Kos1}
is an effectual method.
If $(\h,D)$ is a differential graded Lie algebra
(shortly, dg Lie algebra), then the derived bracket,
$$
[x,y]:=(Dx,y),
\eqno{(0)}
$$
is an odd Leibniz bracket on $\h$,
where $(.,.)$ is the Lie bracket on $\h$ and $x,y\in\h$.
The derived bracketing is a method of
constructing a Leibniz algebra
and one can think that
the deriving differential $D$
has almost every information about the Leibniz bracket.
It is known that the converse also holds.
An arbitrary Leibniz algebra can be embedded
in a dg Lie algebra via the derived bracket construction.
Namely, given a Leibniz algebra $\g$,
there exists a dg Lie algebra $(\h,D)$
which includes $\g$ (precisely $\g[1]$)
and the Leibniz bracket on $\g$ or $\g[1]$
is expressed as the derived bracket on $\h$.
Such an embedding is considered to be
a kind of representation of Leibniz algebra.
So we call this type of representation
a {\em derived bracket representation}.
To find a useful representation is an interesting problem.
For example, the symplectic realization for
Courant algebroids is a derived bracket representation
(for the details see Roytenberg \cite{Roy}).\\
\indent
The first aim of this note is to
construct the universal derived bracket representation.
The derived bracket construction is regarded as
a functor, which is denoted by $\DC$,
from the category of dg Lie algebras
to the one of odd-Leibniz algebras.
The universal representation is defined as
the adjoint functor of the derived bracket construction.
$$
\Hom_{dgLie}(\DR\g[1],\h)
\cong\Hom_{Leib}(\g[1],\DC\h),
$$
where $\DR(-)$ is the functor of
the universal representation.
The second aim is to show that
the principal part of the dg Lie algebra $\DR\g[1]$
is a subcomplex of Loday's complex over $\g$
(the chain complex computing the Leibniz homology group).
Since the Loday complex is a consequence
of the bar construction,
the relation between the derived bracket construction
and the bar construction becomes clear.
The existence of the subcomplex
is closely related with the anti-cyclicity
of the Leibniz operad.
In \cite{Chap} Chapoton proved that
the Leibniz operad is anti-cyclic
(See Section 2.2 below for the details).
In general, if an operad is cyclic (not anti-cyclic),
then the (co)homology complex of the operad-algebra
can be reduced by its symmetry
and the cyclic (co)homology group is defined
(cf. Getzler-Kapranov \cite{GetKap}).
Although the Leibniz operad is not cyclic,
because it is still anti-cyclic,
there exists a subcomplex or quotient complex.
Our subcomplex is exactly that.\\
\indent
Our method of constructing the (sub)complex is not
bar-construction, but the derived bracket construction
or representation.
An advantage of adopting the derived bracket theory,
it is not necessary to use the Koszul duality theory.
To compute the Koszul dual of Leibniz operad
(Zinbiel operad) is not easy by comparison with
the cases of the associative operad and the Lie operad.
By using the derived bracket theory,
one can avoid this problem.
\medskip\\
\noindent
The paper is organized as follows:\\
Section 2 is Preliminaries. We recall
some basic properties of Leibniz algebras
and derived bracket construction (of operadic).\\
In Section 3,
we will construct
the derived bracket representation of universal.
To construct the universal representation
an operad theory will be used.
We will see that
if $\g$ is a Lie algebra as a commutative Leibniz algebra,
then the second homology group of $\DR\g[1]$
is equal to the space of formal 0-forms,
$\Omega^{0}\g$, introduced by Kontsevich \cite{Kont}.
According to Kontsevich,
$\Omega^{0}\g$ is the target space
of the universal invariant bilinear form.
Invariant bilinear forms in the category of Lie algebras
are symmetric pairings satisfying the well-known condition,
$$
\langle{x,[y,z]}\rangle=\langle{[x,y],z}\rangle.
$$
It is well-known that invariant bilinear forms
are induced via the derived bracket construction
(in the case of Drinfeld double by Kosmann-Schwarzbach
\cite{Kos0} and in genral case by Roytenberg \cite{Roy}).
Our result provides the universal version
of the previous studies.\\
In Section 4.1, we will prove that the principal part
of $\DR\g[1]$ is a subcomplex of the Loday complex over $\g$
and the deriving differential on $\DR\g[1]$ is equal to
the boundary map of Loday.
In 4.2,
we study the cohomology counter part of $\DR\g[1]$.
We will introduce the notion of anti-cyclic cochain
for Leibniz algebras.
The anti-cyclic cochains are defined as
the linear functions satisfying a symmetry
induced from the anti-cyclicity of
Leibniz operad, like the cyclic cochains for associative
algebras satisfy
$\varphi(a_{0},...,a_{n})
=(-1)^{n}\varphi(a_{n},a_{0},...,a_{n-1})$
(cf. Connes \cite{Connes}.)
The symmetry that the anti-cyclic cochain satisfies
is more complicated, for instance,
$A(x_{0},x_{1},x_{2})$ is an anti-cyclic 2-cochain
if and only if
\begin{eqnarray*}
A(x_{0},x_{1},x_{2})&=&A(x_{0},x_{2},x_{1}),\\
A(x_{0},x_{1},x_{2})+A(x_{2},x_{0},x_{1})
+A(x_{1},x_{2},x_{0})&=&0.
\end{eqnarray*}
We will prove that the set of anti-cyclic cochains
is a subcomplex of Loday-Phirashviri complex over $\g$
and that the coboundary map of Loday-Phirashviri
is on the subcomplex
the dual of the deriving differential on $\DR\g[1]$.\\
In Section 5, we will study a tensor expression of
the anti-cyclic cochains.
\medskip\\
\noindent
\textbf{Acknowledgement}.
Many parts of this paper were written at
the University of Luxembourg, when the author
was invited by Professor Nobel Poncin.
I would like to thank Professor Poncin,
Ms Katharina Heil, Ms Elodie Reyter
and many staffs of UL
for their kind supports and encouragements.
%%%%%%%%%%%%%%%%%%%%%%%
\section{Preliminaries}
%%%%%%%%%%%%%%%%%%%%%%%
%%%%%%%%%%%%%%%%%%%%%%%%%%%%%
\subsection{Leibniz algebras}
%%%%%%%%%%%%%%%%%%%%%%%%%%%%%

(Left-)Leibniz algebras
are by definition vector spaces $\g$
equipped with binary brackets $[.,.]$
satisfying the (left-)Leibniz identity,
$$
[x_{1},[x_{2},x_{3}]]
=[[x_{1},x_{2}],x_{3}]+[x_{2},[x_{1},x_{3}]],
$$
where $x_{1},x_{2},x_{3}\in\g$.
In the following,
we usually suppose that
the degree of the space $\g$ is homogeneously zero.
Let $\g[1]$ be the shifted space of $\g$,
where the degree of any element $x\in\g[1]$ is $-1$.
The space $\g[1]$ becomes an odd Leibniz algebra
and the degree of the bracket is $+1$.
\medskip\\
\indent
We recall a basic property
of Leibniz algebra, which will be used in the next section.
Let $\g$ be a Leibniz algebra
and let $I$ the space consisting of
the symmetric brackets,
$$
I:=\{[x_{1},x_{2}]+[x_{2},x_{1}],
x_{1},x_{2}\in\g\}.
$$
Then $I$ becomes an ideal of $\g$,
in particular, $[I,\g]=0$.
The quotient space $\g_{Lie}:=\g/I$ becomes a Lie algebra
and this projection $p:\g\to\g_{Lie}$ (so-called Liezation)
is universal, i.e., for any Lie algebra $\h$,
an arbitrary Leibniz algebra morphism
$f:\g\to\h$ factors through $p$,
$f=\psi\c p$,
where $\psi:\g_{Lie}\to\h$ is the Lie homomorphism
corresponding to $f$.
%%%%%%%%%%%%%%%%%%%%%%%%%%%%%%%%%%%%%%%%%%%%%%%%%%%%%
\subsection{Anti-invariant 2-forms for Leibniz algebras}
%%%%%%%%%%%%%%%%%%%%%%%%%%%%%%%%%%%%%%%%%%%%%%%%%%%%%
In this section we suppose that $\g$ is a finite
dimensional Leibniz algebra.
In \cite{Chap} Chapoton proved that
the operad of Leibniz algebras is anti-cyclic.
This means that invariant 2-forms, $\omega(.,.)$,
in the category of Leibniz algebras are anti-symmetric,
i.e., $\omega(x_{1},x_{2})=-\omega(x_{2},x_{1})$
and the invariant condition for $\omega$
is defined by the following two formulas.
\begin{eqnarray}
\label{A1}\omega(x_{1},[x_{2},x_{3}])&=&-\omega([x_{2},x_{1}],x_{3}),\\
\label{A2}\omega(x_{1},[x_{2},x_{3}])&=&\omega([x_{1},x_{3}]+[x_{3},x_{1}],x_{2}).
\end{eqnarray}
Chapoton's original formula has been
defined in the right-version
and we will use the left-version above.
If the 2-form is non-degenerate, then it is a symplectic
structure. Hence we denote it by $\omega$.
Thanks to (\ref{A1})-(\ref{A2}), one can immediately
write-down the coadjoint action in the category of
Leibniz algebras.
\begin{definition}[coadjoint action]
Given a finite dimensional Leibniz algebra $\g$
and its dual space $\g^{*}$,
the coadjoint action of $\g$ to $\g^{*}$ is
by definition,
\begin{eqnarray}
\label{CA1}\langle x_{1},[x_{2},a]\rangle&:=&-\langle[x_{2},x_{1}],a\rangle,\\
\label{CA2}\langle x_{1},[a,x_{2}]\rangle&:=&\langle[x_{1},x_{2}]+[x_{2},x_{1}],a\rangle,
\end{eqnarray}
where $x_{1},x_{2}\in\g$,
$a\in\g^{*}$ and $\langle.,.\rangle$ the natural pairing
betweem $\g$ and $\g^{*}$.
\end{definition}
It is easy to see that the representation
(\ref{CA1})-(\ref{CA2}) is Leibniz, i.e.,
\begin{eqnarray*}
{}[x_{1},[x_{2},a]]&=&[[x_{1},x_{2}],a]+[x_{2},[x_{1},a]],\\
{}[x_{1},[a,x_{2}]]&=&[[x_{1},a],x_{2}]+[a,[x_{1},x_{2}]],\\
{}[a,[x_{1},x_{2}]]&=&[[a,x_{1}],x_{2}]+[x_{1},[a,x_{2}]].
\end{eqnarray*}
Therefore, the semi-direct product,
$\g\ltimes\g^{*}=\g\oplus\g^{*}$,
becomes a Leibniz algebra, whose bracket is
$$
[x_{1}\oplus a_{1},x_{2}\oplus a_{2}]
:=[x_{1},x_{2}]\oplus[x_{1},a_{2}]+[a_{1},x_{2}].
$$
The double space has a canonical symplectic structure,
\begin{equation}\label{defomega}
\omega(x_{1}+a_{1},x_{2}+a_{2}):=
\langle x_{1},a_{2}\rangle-\langle x_{2},a_{1}\rangle.
\end{equation}
The pair $(\g\ltimes\g^{*},\omega)$ satisfies
(\ref{A1})-(\ref{A2}).
%%%%%%%%%%%%%%%%%%%%%%%%%%%%%%%%%%%%%%%%%%%%
\subsection
{Derived bracket construction of operadic}
%%%%%%%%%%%%%%%%%%%%%%%%%%%%%%%%%%%%%%%%%%%%
Let $(\h,(.,.),d)$ be a dg Lie-algebra.
We suppose
that the degree of the Lie bracket is $|(.,.)|=0$
and the one of the differential is $|d|:=+1$.
Define an odd bracket by
\begin{equation}\label{defderived}
[x_{1},x_{2}]:=(dx_{1},x_{2}),
\end{equation}
which is called a binary derived bracket
or derived bracket for short (\cite{Kos1}).
Then $(\h,[.,.]=(d.,.))$
becomes an odd-Leibniz algebra.
\begin{remark}[sign]
Althought in \cite{Kos1}
the derived bracket has been defined as
$(-1)^{|x_{1}|+1}(dx_{1},x_{2})$,
because (\ref{defderived}) has a good compatibility
with operad theory, we will use (\ref{defderived})
as a definition of the derived bracket.
\end{remark}

\begin{definition}[\cite{Uchi}]
Lie-Leibniz algebras are by definition graded spaces
with even-Lie brackets $(.,.)$ and odd-Leibniz brackets
$[.,.]$ satisfying two extra identities,
\begin{eqnarray}
\label{LL1}[x_{1},(x_{2},x_{3})]&=&
([x_{1},x_{2}],x_{3})+(x_{2},[x_{1},x_{3}])\\
\label{LL2}[(x_{1},x_{2}),x_{3}]&=&
([x_{1},x_{2}]-[x_{2},x_{1}],x_{3}),
\end{eqnarray}
where we put $|x_{1}|=|x_{2}|=|x_{3}|:=0$ simply.
\end{definition}
The derived bracket Leibniz algebra $(\h,(.,.),[.,.]=(d.,.))$
is the model of the Lie-Leibniz algebra.
When $|x_{1}|=|x_{2}|=|x_{3}|:=1$,
since the Lie bracket is commutative,
the second condition (\ref{LL2}) has
the following form
$$
[(x_{1},x_{2}),x_{3}]=([x_{1},x_{2}]+[x_{2},x_{1}],x_{3}).
$$
In the following, we put $|(.,.)|:=0$ and $|[.,.]|:=+1$.
\begin{remark}[Jacobi identity
and derived bracket construction]
Many identities that the derived bracket satisfies
are consequences of the Jacobi identity of
the original Lie bracket. However
(\ref{LL2}) is completely
independent from the Jacobi identity.
It is a consequence of the derivation rule $d(x_{1},x_{2})=(dx_{1},x_{2})+(x_{1},dx_{2})$.
\end{remark}
\indent
We recall three known propositions,
which will be used in the next section
to prove the key-lemma of this note.
Before that, we briefly recall algebraic operads.
A collection $\P=(\P(n))$ consisting of
modules $\P(n)$ over the symmetric group
$S_{n}$ is called an $\S$-module.
Given an $\S$-module $\P$, a functor,
so-called Schur functor, is defined by
$$
F_{\P}V:=
\bigoplus_{n\in\mathbb{N}}\P(n)\ot_{S_{n}}V^{\ot n},
$$
where $V$ is a vector space.
Given two $\S$-modules $\P$ and $\Q$,
a tensor product $\P\odot\Q$ is defined
by $F_{\P}F_{\Q}:=F_{\P\odot\Q}$,
more explicitly,
\begin{equation}\label{defodot}
(\P\odot\Q)(n):=\bigoplus_{l_{1}+\cdots+l_{m}=n}
\P(m)\ot_{S_{m}}
(\Q(l_{1}),...,\Q(l_{m}))
\ot_{(S_{l_{1}},...,S_{l_{m}})}S_{n}.
\end{equation}
The $\S$-module $\P$ is called
an operad, if $F_{\P}$ is a triple
(cf. MacLane \cite{Mac}),
namely, if there exists a morphism
(natural transformation),
$F_{\P}F_{\P}\to F_{\P}$,
and if by this operation $F_{\P}$ becomes a unital
associative monoid.
When $\P$ is an operad, the notion of $\P$-algebra
is defined.
If $\A$ is a $\P$-algebra, then the $\P$-algebra
product on $\A$ is defined as a map of
\begin{equation}\label{defgamma}
\gamma:F_{\P}\A\to\A.
\end{equation}
In particular, $\A=F_{\P}V$ is the free $\P$-algebra.
The free operad over an $\S$-module $\Q$,
$\T\Q$, is the free algebra
in the category of operads and an algebraic operad $\P$
is expressed as a quotient operad
of the free operad $\P:=\T\Q/(R)$,
where
$R$ is the relation of $\P$
and $(R)$ is the generated operadic ideal.
If the free operad $\T\Q$ is generated by $\Q(2)$
and if $R$ is a sub $S_{3}$-module of $(\T\Q)(3)$,
then the quotient operad $\P:=\T\Q/(R)$
is called a binary quadratic operad
and $R$ is called the quadratic relation.
For example, the Lie operad (the operad of Lie algebras)
is a binary quadratic operad over $sgn_{2}$,
$$
\Lie:=\T(sgn_{2})/(R_{\Lie}),
$$
where
$sgn_{2}$ is the sign representation of $S_{2}$
and $R_{Lie}$ is the Jacobi identity.
As a result, $\Lie(2)=sgn_{2}$ and the base of $\Lie(2)$
is identified with the universal Lie bracket,
which is denoted by $(1,2)$.
Then $R_{Lie}$ is identified with the space
generated by the Jacobiator,
$$
R_{Lie}=<(1,(2,3))+(3,(1,2))+(2,(3,1))>.
$$
The Leibniz operad, $\Leib$,
is also defined by the same manner.
The parity shift for operad, $\P\mapsto\s\P$,
is defined by $(\s\P)(n):=\P(n)[n-1]\ot sgn_{n}$
for each $n$. If $|\P|=even$, then
$\s\P$-algebras are odd-$\P$-algebras.
%In the following we denote
%by $\Lie$ the Lie operad,
%by $\Leib$ the Leibniz operad and
%by $\s\Leib$ the operad of odd Leibniz algebras.
%These opreads are binary quadratic operads.
\medskip\\
\indent
Let us denote by $\LL$ the operad of Lie-Leibniz algebras,
which is a binary quadratic operad
$$
\LL:=\T\big(\Lie(2)\oplus\s\Leib(2)\big)/(R_{\LL}),
$$
where $R_{\LL}$ is the quadratic relation of $\LL$.
It is obvious that $\LL=(\LL^{i})$
is a graded operad, in particular,
$\LL^{0}=\Lie$ and $\LL^{top}=\s\Leib$.
\begin{proposition}[\cite{Uchi}]\label{keyprop1}
$\LL=\Lie\ot\D:=(\Lie(n)\ot\D(n))$, where
$\D$ is a graded operad defined as follows.
\end{proposition}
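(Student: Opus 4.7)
The plan is to first identify the operad $\D$ explicitly and then prove the Hadamard factorization. From the graded decomposition $\LL^{0}=\Lie$ and $\LL^{top}=\s\Leib$, together with Chapoton's factorization $\Leib=\Lie\ot\Perm$, I would define $\D$ as the graded binary quadratic operad whose degree-$0$ generator is a symmetric bilinear operation $m$ and whose degree-$1$ generator is an operation $e$, with relations arranged so that $\D^{0}(n)=k$ (the trivial $S_n$-module) and $\D^{top}(n)=\s\Perm(n)$. Concretely, the relations of $\D$ should be the image of the $\LL$-relations under the decomposition that separates a ``Lie skeleton'' from a combinatorial marking.

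Next, I would construct the comparison map $\Phi:\Lie\ot\D\to\LL$ as a morphism of operads by sending the Lie generator of $\Lie(2)\ot\D^{0}(2)$ to the Lie bracket of $\LL$ and $\Lie(2)\ot\D^{top}(2)$ to the Leibniz bracket (using that both $\Lie(2)$ and the $e$-slot carry the sign representation, so the diagonal $S_2$-action gives the correct symmetry type). One then verifies that the images satisfy the Jacobi identity for $(.,.)$, the Leibniz identity for $[.,.]$, and both compatibilities (\ref{LL1}) and (\ref{LL2}); by the universal property of the free operad, this extends uniquely to $\Phi$.

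Surjectivity of $\Phi$ is the structural part. The idea is to use (\ref{LL1}) and (\ref{LL2}) as rewriting rules that push every Lie bracket outward past every Leibniz bracket: relation (\ref{LL1}) rewrites $[x,(y,z)]$ as a Lie bracket of Leibniz brackets, and relation (\ref{LL2}) rewrites $[(x,y),z]$ as a Leibniz bracket (in the symmetrized first slot) inside another Leibniz bracket. Iterating these moves brings any $\LL$-monomial into a normal form that is visibly in the image of $\Phi$.

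Injectivity (equivalently, the dimension bound) is where the real work sits. The natural route is to establish that these rewriting rules constitute a convergent (terminating and confluent) distributive law between $\Lie$ and $\D$: termination is clear since every application of (\ref{LL1}) or (\ref{LL2}) strictly reduces a ``Lie depth under Leibniz'' statistic, so the main obstacle is verifying confluence on the critical three-fold compositions where two rewrites could be applied at once. Once confluence is established, each class of $\LL(n)$ has a unique normal form and one obtains the basis matching $\Lie(n)\ot\D(n)$; equivalently, this confirms Koszulness / the PBW property of $\LL$, and the Hadamard factorization follows by comparing characters (or Poincaré series) in each arity and each degree.
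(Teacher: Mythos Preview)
Your proposal conflates two different decompositions that the paper keeps separate. The rewriting argument you sketch---using (\ref{LL1}) and (\ref{LL2}) to push Lie brackets outward past Leibniz brackets until every monomial is a Lie word in Leibniz words---is exactly the argument for the \emph{composition} (or plethysm) decomposition $\LL=\Lie\odot\s\Leib$, which is Proposition~\ref{keyprop3}. It is not an argument for the \emph{Hadamard} factorization $\LL(n)=\Lie(n)\otimes\D(n)$ of Proposition~\ref{keyprop1}. Normal forms of the type ``Lie brackets of Leibniz brackets'' do not display an element of $\LL(n)$ as a pure tensor in $\Lie(n)\otimes\D(n)$; they display it as a sum over partitions $l_1+\cdots+l_m=n$ of terms in $\Lie(m)\otimes_{S_m}(\s\Leib(l_1),\ldots,\s\Leib(l_m))\otimes\cdots$, which is the $\odot$-product, not the aritywise $\otimes$. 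So your surjectivity and injectivity steps, as written, prove the wrong statement.

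Relatedly, your description of $\D$ diverges from the paper's. In the paper $\D$ is \emph{not} presented as a binary quadratic operad: it is the suboperad of $\O=\T(d,1\ot1)/(R_\O)$ generated by the commutative product together with a formal unary differential $d$, with $\D^{i}(n)$ spanned by the $\binom{n}{i}$ placements of $i$ copies of $d$ among $n$ slots (for $0\le i\le n-1$). The isomorphism $\LL=\Lie\ot\D$ is then the literal identification of a Lie-Leibniz word with a Lie word in which some inputs carry a $d$, e.g.\ $(1,(2,3))\ot(1\ot d\ot 1)=(1,(d2,3))$; the Leibniz bracket is $(d1,2)$. That is the mechanism behind the Hadamard factorization, and it is orthogonal to the distributive-law rewriting you propose. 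If you want to salvage your outline, you would need to (i) either adopt the paper's $\D$ or prove that your binary-quadratic candidate is isomorphic to it, and (ii) replace the rewriting argument by one that matches each $\LL(n)$-basis element with a pair (Lie tree shape, pattern of $d$'s), which is what the derived-bracket description actually does.
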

To introduce $\D$
we use the following expression of $\Com$
(the operad of commutative associative algebras).
\begin{eqnarray*}
\Com(2)&=&<1\ot 1>\\
\Com(3)&=&<1\ot 1\ot 1>\\
\cdots &=&\cdots\\
\Com(n)&=&<1^{\ot n}>.
\end{eqnarray*}
Let $d$ be a formal 1-ary operator of degree $+1$
and $1\ot 1$ be the generator of $\Com$.
We consider a quadratic operad over $d$ and $1\ot 1$.
$$
\O:=\T(d,1\ot 1)/(R_{\O}),
$$
where $R_{\O}$ is a quadratic relation
generated by
\begin{eqnarray*}
(1\ot 1)\c_{1}(1\ot 1)&=&1\ot 1\ot 1=(1\ot 1)\c_{2}(1\ot 1),\\
d(1\ot 1)&=&d\ot 1+1\ot d,\\
dd&=&0.
\end{eqnarray*}
Namely, $d$ is a differential in $\Com$.
Obviously, $\O$ is a graded operad, $\O=\{\O^{i}\}$,
whose degree is the number of $d$.
For each $n$,
$$
\O(n)=\O^{0}(n)\oplus\O^{1}(n)\oplus\cdots\oplus\O^{n}(n).
$$
and $\O^{0}=\Com$.
There is no $\O^{n+1}(n)$ because $dd=0$.
The operad $\D$ is defined as a suboperad of $\O$:
\begin{definition}[\cite{Uchi}]
For each $n$,
$$
\D(n):=\O^{0}(n)\oplus\O^{1}(n)\oplus\cdots\oplus\O^{n-1}(n).
$$
\end{definition}
This operad is expressed as follows,
\begin{eqnarray*}
\D^{1}(2)&=&<d\ot 1, 1\ot d>,\\
\D^{1}(3)&=&<d\ot 1\ot 1,1\ot d\ot 1,1\ot 1\ot d>,\\
\D^{2}(3)&=&<d\ot d\ot 1,d\ot 1\ot d,1\ot d\ot d>,\\
\cdots &=& \cdots.
\end{eqnarray*}
The elements in $\LL=\Lie\ot\D$ are identified with
formal derived brackets, for instance,
\begin{eqnarray*}
(1,2)\ot(d\ot 1)&=&(d1,2),\\
(1,(2,3))\ot(1\ot d\ot 1)&=&(1,(d2,3)),
\end{eqnarray*}
where $(1,2)$ is the Lie bracket in $\Lie(2)$.
Hence the functor $(-)\ot\D$ is
considered to be a derived bracket
construction of operadic.
\begin{proposition}[\cite{Uchi}]\label{keyprop2}
For each $n$,
$(\LL(n),\delta)$ is complex.
$$
\begin{CD}
\Lie(n)=\LL^{0}(n)@>\delta>>\LL^{1}(n)@>\delta>>\cdots
@>\delta>>\LL^{n-1}(n)=\s\Leib(n).
\end{CD}
$$
\end{proposition}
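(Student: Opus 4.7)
The plan is to use the identification $\LL(n)=\Lie(n)\ot\D(n)$ from Proposition \ref{keyprop1} and define $\delta$ by letting the formal differential $d\in\O(1)$ act on the $\D$-factor, with the Lie factor inert. Since $\D^{i}(n)=\O^{i}(n)$ for $0\le i\le n-1$ and $d$ raises the $\O$-grading by one, this produces a candidate of the required shape provided the image of $\delta$ stays inside $\D$ (not merely $\O$) and $\delta^{2}=0$.

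First I would write $\delta$ explicitly. The defining relations of $\O$ (the Leibniz rule $d(1\ot 1)=d\ot 1+1\ot d$ together with $dd=0$) force $d$ to act on a basis element $e_{1}\ot\cdots\ot e_{n}\in\O^{i}(n)$, with each $e_{j}\in\{1,d\}$, as the Koszul ``add-a-$d$'' operator
$$
d_{*}(e_{1}\ot\cdots\ot e_{n})
=\sum_{j:\,e_{j}=1}(-1)^{\epsilon_{j}}\,
e_{1}\ot\cdots\ot e_{j-1}\ot d\ot e_{j+1}\ot\cdots\ot e_{n},
$$
where $\epsilon_{j}$ counts the number of $d$'s strictly to the left of position $j$. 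Set $\delta:=\mathrm{id}_{\Lie}\ot d_{*}$. For $0\le i\le n-2$ the image $d_{*}\O^{i}(n)\subset\O^{i+1}(n)$ sits inside $\D^{i+1}(n)$, so $\delta\colon\LL^{i}(n)\to\LL^{i+1}(n)$ is well-defined; on the top piece $\LL^{n-1}(n)$ the would-be image would land in $\O^{n}(n)$, which is outside $\D(n)$, so one declares $\delta\equiv 0$ there. This matches the endpoints demanded by the proposition, since $\LL^{0}(n)=\Lie(n)$ and $\LL^{n-1}(n)=\s\Leib(n)$.

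Finally I would verify $\delta^{2}=0$, which, since the Lie factor is inert, reduces to $d_{*}^{2}=0$ on $\O(n)$. In the interior range this is the standard de Rham sign cancellation (each ordered pair of $1$-slots is visited in both orders with opposite Koszul signs), and at the level just below the top the vanishing is automatic from the truncation convention. The step I expect to need the most care is checking that $d_{*}$ really is the natural operator on $\D$ intrinsic to the quotient operad $\LL=\T(\Lie(2)\oplus\s\Leib(2))/(R_{\LL})$; I would handle this by treating Proposition \ref{keyprop1} as a black box, after which $\delta$ becomes tensorial, the derivation property of $d$ inside $\O$ is manifest, and the remaining verifications are purely formal.
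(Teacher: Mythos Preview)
Your proposal is correct and matches the paper's approach essentially line for line: the paper defines $\delta:=\Lie\ot d$ via Proposition~\ref{keyprop1}, with $d$ the Koszul ``add-a-$d$'' differential on $\D(n)\subset\O(n)$, and then notes that $(\D(n),d)$ is clearly a complex, which is exactly your $d_{*}$ and your sign cancellation argument. Your explicit discussion of the truncation at degree $n-1$ and of why the image of $\delta$ stays inside $\D$ is slightly more detailed than what the paper spells out, but there is no substantive difference in strategy.
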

The differential of the proposition is defined as follows.
For each $n$, $(\D(n),d)$ is clearly a complex,
whose differential is defined by
$$
d(x_{1}\ot\cdots\ot x_{n}):=\sum_{1\le{i}\le n}
(-1)^{|x_{1}|+\cdots+|x_{i-1}|}
(x_{1}\ot\cdots\ot x_{i-1}\ot dx_{i}\ot \cdots \ot x_{n}),
$$
where $x_{j}\in\{1,d\}$.
Since $\LL=\Lie\ot\D$,
we obtain a differential on $\LL$,
\begin{equation}\label{deflied}
\delta:=\Lie\ot d.
\end{equation}
We should remark that $\LL$ is not dg-operad.
\begin{proposition}[\cite{Uchi}]\label{keyprop3}
The Lie-Leibniz identity
(\ref{LL1})-(\ref{LL2}) is a distributive law in the sense
of Markl \cite{Markl}.
Therefore,
the operad $\LL$ is decomposed into $\Lie$ and $\s\Leib$.
$$
\LL=\Lie\odot\s\Leib,
$$
where
$\odot$ is the tensor product in the category of
$\S$-modules, cf., (\ref{defodot}).
\end{proposition}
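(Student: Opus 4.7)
The plan is to verify Markl's distributive-law axioms for the mixed relations (\ref{LL1})--(\ref{LL2}) and then invoke his general theorem to obtain the decomposition $\LL=\Lie\odot\s\Leib$. Since $\LL$ is generated by $\Lie(2)\oplus\s\Leib(2)$ with relations split into the Jacobi identity (internal to $\Lie$), the shifted Leibniz identity (internal to $\s\Leib$), and the mixed identities (\ref{LL1})--(\ref{LL2}), there is a canonical surjection from the operadic free product $\Lie\ast\s\Leib$ onto $\LL$. The task is to show that the mixed relations allow a normal-form rewriting of every element of $\Lie\ast\s\Leib$ as ``a Lie bracket applied to odd-Leibniz brackets,'' and that this rewriting is consistent in Markl's sense.

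\textbf{Step 1 (defining $\lambda$).} I would define a morphism of $\S$-modules
\begin{equation*}
\lambda \colon \s\Leib\odot\Lie \longrightarrow \Lie\odot\s\Leib
\end{equation*}
on generators. In arity $3$ the source is spanned by $[\,\cdot\,,(\,\cdot\,,\,\cdot\,)]$ and $[(\,\cdot\,,\,\cdot\,),\,\cdot\,]$ together with their $S_{3}$-orbits, and the identities (\ref{LL1})--(\ref{LL2}) prescribe exactly how to rewrite each of these as an element of $(\Lie\odot\s\Leib)(3)$. Extending by $S_{3}$-equivariance and then propagating through operadic composition gives the candidate distributive law.

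\textbf{Step 2 (coherence).} The main work is to check Markl's two pentagon-type axioms, which live in arity $4$: (i) compatibility of $\lambda$ with operadic composition in $\Lie$, and (ii) compatibility with operadic composition in $\s\Leib$. Each axiom unfolds to a finite identity in the free operad $\T(\Lie(2)\oplus\s\Leib(2))$ that must be verified directly against (\ref{LL1})--(\ref{LL2}), the Jacobi identity, and the shifted Leibniz identity. This is the main obstacle: the parity shift on $\s\Leib$ and the asymmetry between the two slots of the Leibniz bracket (responsible for the symmetrizer $[x_{1},x_{2}]+[x_{2},x_{1}]$ appearing in (\ref{LL2})) make the bookkeeping delicate, and each rewriting chain must be reduced to the same canonical form by two different orderings of the rewrite steps.

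\textbf{Step 3 (conclusion and cross-check).} Once $\lambda$ is established as a distributive law, Markl's theorem equips $\Lie\odot\s\Leib$ with an operad structure and produces a natural isomorphism with the quotient of $\Lie\ast\s\Leib$ by the relations encoded by $\lambda$, which is precisely $\LL$. As an independent sanity check I would use Proposition \ref{keyprop1}, which gives $\LL=\Lie\ot\D$ as $\S$-modules: comparing $\Lie\odot\s\Leib$ to $\Lie\ot\D$ arity by arity, agreement on the $\Lie$-factor reduces the verification to matching the Leibniz operad against the ``deriving'' part $\D$, confirming that the arity-wise dimensions on both sides of $\LL=\Lie\odot\s\Leib$ indeed coincide.
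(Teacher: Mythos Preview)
The paper does not actually contain a proof of this proposition: it is one of the ``three known propositions'' recalled from \cite{Uchi} without argument, so there is no in-paper proof to compare against. Your outline is the standard way to establish such a statement---define the rewriting $\lambda:\s\Leib\odot\Lie\to\Lie\odot\s\Leib$ from the mixed relations, verify Markl's coherence (pentagon) axioms in arity $4$, and then invoke his theorem---and this is presumably what \cite{Uchi} does.

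Two minor cautions. First, your Step~2 is where the entire content lives, and you have (correctly) flagged it as delicate but not carried it out; the parity bookkeeping with $\s\Leib$ and the asymmetric appearance of the symmetrizer in (\ref{LL2}) do require care, so ``the plan is to verify'' is not yet a proof. Second, your cross-check in Step~3 is a little circular as stated: comparing $\Lie\odot\s\Leib$ with $\Lie\ot\D$ arity by arity does not obviously reduce to ``matching $\s\Leib$ against $\D$,'' since $\odot$ and $\ot$ are quite different constructions and the $\Lie$-factor does not simply cancel. A cleaner sanity check is the dimension count $\dim\LL(n)=\dim\Lie(n)\cdot\dim\D(n)$ from Proposition~\ref{keyprop1} versus the known formula for $\dim(\Lie\odot\s\Leib)(n)$ coming from the composition of Schur functors.
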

As a corollary of this proposition, we have
\begin{corollary}[\cite{Uchi}]\label{lieleibcoro}
Let $\g$ be a Leibniz algebra
and let $\g[1]$ the shifted odd-Leibniz algebra.
Then the free-Lie algebra over $\g[1]$,
$F_{Lie}\g[1]$, is a Lie-Leibniz algebra.
When $\g$ is free, the Lie-Leibniz algebra is also free.
\end{corollary}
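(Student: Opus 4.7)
The plan is to derive both assertions from Proposition \ref{keyprop3} together with the general principle that a distributive law between two operads produces a lifting of algebra structures.

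First, by Proposition \ref{keyprop3} the Lie--Leibniz identities (\ref{LL1})--(\ref{LL2}) form a distributive law $\lambda : \s\Leib \odot \Lie \to \Lie \odot \s\Leib$, and correspondingly $\LL = \Lie \odot \s\Leib$ as $\S$-modules. Passing to Schur functors, this decomposition gives the composition of monads
\[
F_{\LL} \;\cong\; F_{\Lie} \circ F_{\s\Leib}.
\]
I would then invoke the standard consequence of Beck's monadicity theorem in the operadic setting: whenever $\P = \Q \odot \R$ arises from a distributive law, the free $\Q$-algebra functor lifts to a functor from $\R$-algebras to $\P$-algebras, i.e.\ for any $\R$-algebra $A$, the object $F_{\Q} A$ carries a canonical $\P$-algebra structure whose $\Q$-part is the free one and whose $\R$-part extends the given $\R$-structure on $A$ via the distributive law.

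Applying this with $\Q = \Lie$, $\R = \s\Leib$, and $A = \g[1]$ (which is an $\s\Leib$-algebra by hypothesis), the free Lie algebra $F_{\Lie}\g[1]$ inherits a canonical $\LL$-algebra structure. Concretely, the Lie bracket $(.,.)$ is the free one, and the odd Leibniz bracket is determined by the bracket already present on $\g[1]$ together with the two rules (\ref{LL1}) and (\ref{LL2}), which allow one to move $[.,.]$ past $(.,.)$ and thereby extend $[.,.]$ uniquely by induction on the word-length inside $F_{\Lie}\g[1]$. The distributive law property of (\ref{LL1})--(\ref{LL2}) guaranteed by Proposition \ref{keyprop3} is exactly what ensures this extension is well-defined and satisfies the Lie--Leibniz axioms --- this is the only nontrivial point, and the main potential obstacle, since it is where one must verify that the competing rewritings by (\ref{LL1}) and (\ref{LL2}) agree.

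For the second assertion, assume $\g$ is free as a Leibniz algebra on a generating set $X$. Then $\g[1]$ is the free $\s\Leib$-algebra on $X[1]$, i.e.\ $\g[1] \cong F_{\s\Leib} X[1]$. Combining with the Schur-functor factorisation above,
\[
F_{\Lie}\g[1] \;\cong\; F_{\Lie} F_{\s\Leib} X[1] \;\cong\; F_{\LL} X[1],
\]
which is by definition the free Lie--Leibniz algebra on $X[1]$. Universality of $F_{\LL}X[1]$ among $\LL$-algebras under $X[1]$ follows from the universality of the two composing free functors, completing the proof.
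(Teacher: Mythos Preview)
Your proposal is correct and follows exactly the approach the paper intends: the corollary is stated immediately after Proposition~\ref{keyprop3} with no further proof beyond the attribution to \cite{Uchi}, so the paper is implicitly relying on the same distributive-law machinery (Beck/Markl) that you spell out. Your argument --- that $\LL=\Lie\odot\s\Leib$ gives $F_{\LL}\cong F_{\Lie}\circ F_{\s\Leib}$, hence $F_{\Lie}$ lifts $\s\Leib$-algebras to $\LL$-algebras and preserves freeness --- is precisely the standard unpacking of ``corollary of Proposition~\ref{keyprop3}''.
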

We will study the Lie-Leibniz algebra $F_{Lie}\g[1]$
in the next section.

%%%%%%%%%%%%%%%%%%%%%%%%%%%%%%%%%%%%%%%%
\section{Derived bracket representation}
%%%%%%%%%%%%%%%%%%%%%%%%%%%%%%%%%%%%%%%%

It is known that an arbitrary Leibniz algebra
can be embedded in a dg Lie algebra,
via the derived bracket construction.
There are some methods of proving this proposition.
For instance,
by extending the result in Grabowski et al \cite{GKP},
by the universal method that we will introduce
in the following.
\begin{definition}
Let $\g$ be a Leibniz algebra,
let $\g[1]$ be the shifted odd-Leibniz algebra
and let $(\h,(d.,.))$ the derived bracket Leibniz algebra.
A momorphism of Leibniz algebra
(not necessarily embedding), $\g[1]\to\h$,
is called a derived bracket representation.
\end{definition}
The aim of this section is
to construct the universal representation.
\begin{lemma}
If $\g$ is a Leibniz algebra, then
the graded space $F_{Lie}\g[1]$ is a chain complex,
$$
\begin{CD}
\cdots @>{\d}>>F_{Lie}^{3}\g[1]@>{\d}>>
F_{Lie}^{2}\g[1]@>{\d}>>\g[1]@>{0}>>0.
\end{CD}
$$
\end{lemma}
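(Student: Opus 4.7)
The plan is to construct $\d$ by transporting the operadic differential $\delta$ of Proposition~\ref{keyprop2} to the algebra $F_{Lie}\g[1]$ via the $\LL$-algebra structure given by Corollary~\ref{lieleibcoro}, and to derive the chain-complex property from the operadic identity $\delta^{2}=0$ together with the Leibniz identity on $\g[1]$. Using Proposition~\ref{keyprop1} and $\LL^{0}=\Lie$, I identify
$$
F_{Lie}^{n}\g[1]\;=\;\Lie(n)\ot_{S_{n}}\g[1]^{\ot n}\;=\;\LL^{0}(n)\ot_{S_{n}}\g[1]^{\ot n}.
$$
An element of $\LL^{1}(n)$ is a Lie operation decorated by exactly one formal derivative $d$; when it is evaluated on $(x_{1},\ldots,x_{n})\in\g[1]^{\ot n}$ and simplified through the Lie-Leibniz distributive laws~(\ref{LL1})-(\ref{LL2}), that single $d$ is realized as one application of the Leibniz bracket of $\g[1]$, which fuses two of the inputs into a single element of $\g[1]$, so the result is a Lie word of weight $n-1$. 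I therefore set
$$
\d\bigl(\mu\cdot(x_{1},\ldots,x_{n})\bigr)\;:=\;\delta(\mu)\cdot(x_{1},\ldots,x_{n})\;\in\;F_{Lie}^{n-1}\g[1],
$$
with $\delta=\Lie\ot d$ as in Proposition~\ref{keyprop2}. On $F_{Lie}^{1}\g[1]=\g[1]$, the space $\LL^{1}(1)$ is empty since $\D$ has no arity-one $d$, hence $\d=0$ there, which gives the terminal zero of the complex stated in the lemma.

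Well-definedness of $\d$ on the quotient $\LL^{0}(n)\ot_{S_{n}}\g[1]^{\ot n}$ is immediate from the $S_{n}$-equivariance of $\delta$. For $\d^{2}=0$, the natural tool is the operadic identity $\delta^{2}=0$ on $\LL(n)$, which itself holds because $\delta=\Lie\ot d$ and the relation $dd=0$ is built into the defining relations of the operad $\O$. To read this off at the algebra level, I would expand $\delta^{2}(\mu)\in\LL^{2}(n)$ as a double sum over ordered pairs of $d$-insertions, act on $(x_{1},\ldots,x_{n})$, and reorganize the result via~(\ref{LL1})-(\ref{LL2}); the algebraic terms then split into pairs whose sum is forced to vanish by the Leibniz identity on $\g[1]$.

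The main obstacle is precisely this last passage, because the paper remarks explicitly that $\LL$ is \emph{not} a dg-operad: $\delta$ is not automatically compatible with operad composition, so $\delta^{2}=0$ at the operad level does not transfer formally to $\d^{2}=0$ at the algebra level. Concretely, applying $\d$ twice requires re-expressing an $\LL^{1}(n)$-action on $n$ letters as a $\Lie(n-1)$-action on $n-1$ derived letters (some of them being Leibniz brackets of the original generators) before applying $\delta$ a second time; the Leibniz identity of $\g[1]$ is exactly what makes this re-expression consistent with the second differential. Once that compatibility is verified, the chain-complex property is immediate.
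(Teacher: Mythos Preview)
Your construction of $\d$ is exactly the paper's: it too defines $\d$ as the composite of $\delta\ot 1$ (Proposition~\ref{keyprop2}) with the Leibniz contraction $\Lie\odot\gamma$ (Proposition~\ref{keyprop3}), landing in $\Lie(n-1)\ot_{S_{n-1}}\g[1]^{\ot n-1}$, and then simply records that $\d\d=0$ follows from the odd Leibniz identity on $\g[1]$, with no further computation. Your final paragraph correctly isolates why $\delta^{2}=0$ alone does not suffice and what role the Leibniz identity plays; the paper's own proof is actually terser than yours on this point.
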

\begin{proof}
To define $\d$ we use
Propositions \ref{keyprop2} and \ref{keyprop3}.
The differential is defined as
a composition map of $\delta$ and $\gamma$,
where $\delta$ is the differential defined
in (\ref{deflied}) and $\gamma$
is the Leibniz product in (\ref{defgamma}).
\begin{multline*}
\d:F_{Lie}^{n}\g[1]=
\Lie(n)\ot_{S_{n}}\g[1]^{\ot n}
\overset{\delta\ot 1}{\longrightarrow}
\LL^{1}(n)\ot_{S_{n}}\g[1]^{\ot n}=\\
=\Lie(n-1)\odot\s\Leib(2)\ot_{S_{n}}\g[1]^{\ot n}
\overset{\Lie\odot\gamma}{\longrightarrow}
\Lie(n-1)\ot_{S_{n}}\g[1]^{\ot n-1}
=F_{Lie}^{n-1}\g[1].
\end{multline*}
In above sequence, the part of $\gamma$
is more precisely expressed as follows.
\begin{multline*}
\LL^{1}(n)\ot_{S_{n}}\g[1]^{\ot n}=\\
=\bigoplus_{i+j+1=n-1}\Lie(n-1)\ot_{S_{n-1}}
(1^{\ot i},\s\Leib(2),1^{\ot j})
\ot_{(1^{\ot i},S_{2},1^{\ot j})}\ot S_{n}\ot_{S_{n}}
\g[1]^{\ot n}\\
=\bigoplus_{i+j+1=n-1}\Lie(n-1)\ot_{S_{n-1}}
(1^{\ot i},\s\Leib(2),1^{\ot j})
\ot_{(1^{\ot i},S_{2},1^{\ot j})}\ot\g[1]^{\ot n}\\
\end{multline*}
and
$$
\Lie\odot\gamma=
\bigoplus_{i+j+1=n-1}
\Lie(n-1)\ot 1^{\ot i}\ot\gamma\ot 1^{\ot j}.
$$
From the odd Leibniz identity on $\g[1]$,
$\d\d=0$ holds.
\end{proof}
More explicitly $\d$ is computed as follows.
Denote the right normalized bracket\\
$(x_{1},(x_{2},(x_{3},....,x_{n})))$ by simply
$\{x_{1},...,x_{n}\}$.
\begin{multline}\label{explid1}
\d\{x_{1},...,x_{n+1}\}=
\sum_{\substack{i<j \\ i\le n}}
(-1)^{i-1}\{x_{1},...,x_{i}^{\vee},...,[x_{i},x_{j}],x_{j+1},...,x_{n+1}\}+\\
+(-1)^{n-1}\{x_{1},...,x_{n-1},[x_{n+1},x_{n}]\},
\end{multline}
or equivalently,
\begin{multline}\label{explid2}
=\sum_{\substack{i<j \\ i\le n-1}}
(-1)^{i-1}\{x_{1},...,x_{i}^{\vee},...,[x_{i},x_{j}],x_{j+1},...,x_{n+1}\}+\\
+(-1)^{n-1}\{x_{1},...,x_{n-1},[x_{n},x_{n+1}]+[x_{n+1},x_{n}]\}.
\end{multline}
For example,
\begin{eqnarray}
\label{fderirule}
\d(x_{1},x_{2})&=&[x_{1},x_{2}]+[x_{2},x_{1}],\\
\label{fderirule2}
\d(x_{1},(x_{2},x_{3}))&=&([x_{1},x_{2}],x_{3})+
(x_{2},[x_{1},x_{3}])-(x_{1},[x_{2},x_{3}]+[x_{3},x_{2}]).
\end{eqnarray}
We should remark that $(F_{Lie}\g[1],\d)$
is not dg Lie algebra.
Although $\d$ is not derivation,
for any $\alpha_{1},\alpha_{2}\in F_{Lie}^{\ge 2}\g_{Lie}[1]$, it still satisfies the rule of derivation,
\begin{eqnarray*}
\d(\alpha_{1},\alpha_{2})&=&
(\d\alpha_{1},\alpha_{2})+
(-1)^{|\alpha_{1}|}(\alpha_{1},\d\alpha_{2})\\
&=&[\alpha_{1},\alpha_{2}]-
(-1)^{|\alpha_{1}||\alpha_{2}|}
[\alpha_{2},\alpha_{1}],
\end{eqnarray*}
where $[\alpha_{1},\alpha_{2}]$ is the Lie-Leibniz bracket
(recall Corollary \ref{lieleibcoro}).
Hence one can think that $\d$ is an {\em almost derivation}
on the free Lie algebra.
To define $\d$ the Leibniz identity,
or the third component of the Leibniz operad $\Leib(3)$,
was not used. Therefore
\begin{corollary}
For any binary product on $\g$,
although in general $\d\d\neq 0$,
a map $\d$ is well-defined
by the same manner as above.
\end{corollary}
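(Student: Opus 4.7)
The plan is to revisit the definition of $\d$ given in the previous lemma and simply observe that it uses only the data of a binary bracket on $\g$, with the Leibniz identity entering nowhere in the construction itself; that identity is invoked solely at the very last step, when one concludes $\d\d=0$.

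Concretely, I would walk through the composition $(\Lie\odot\gamma)\circ(\delta\ot 1)$ defining $\d$ and check each factor. The first map $\delta$ is the operadic differential on $\LL$ furnished by Proposition \ref{keyprop2}; it is a purely structural morphism of $\S$-modules, built from the operad $\LL$ alone and totally independent of $\g$. The second map $\gamma$ is the structure map of an $\s\Leib$-algebra on $\g[1]$ restricted to arity $2$. But $\s\Leib(2)$, as an $S_{2}$-module, is nothing more than the free $S_{2}$-module on one generator (the bracket symbol, with the opposite bracket obtained by the transposition action). Therefore, to specify $\gamma$ in arity $2$ one needs exactly a bilinear map $\g\ot\g\to\g$; no relation on this map is required, because the Leibniz identity lives in arity $3$ and does not show up in the definition of $\gamma$ at this level.

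Since $\d$ refers only to these two pieces of data, the explicit formula (\ref{explid1}) makes literal sense for any binary operation on $\g$, and produces a well-defined linear map $F_{Lie}^{n}\g[1]\to F_{Lie}^{n-1}\g[1]$. I would close by remarking that only the equation $\d\d=0$, whose expansion requires cancelling terms via the left-Leibniz identity on $\g[1]$, fails in this general setting; so the resulting sequence is a graded map of degree $-1$ but not a differential.

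The main (and quite mild) obstacle is bookkeeping: one has to pinpoint exactly where in the earlier argument the Leibniz relation was used, which is nothing other than the final sentence of the proof of the lemma. The rest of the construction goes through verbatim, so the corollary is essentially a reading of what the previous proof actually needs.
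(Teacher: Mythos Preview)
Your proposal is correct and follows essentially the same approach as the paper's proof: both observe that the definition of $\d$ uses only the action of $\s\Leib(2)$ on $\g[1]^{\ot 2}$, which requires nothing more than a binary product on $\g$, while the Leibniz identity (living in arity $3$) enters only in the verification of $\d\d=0$. Your account is simply a more detailed unpacking of the paper's one-line argument.
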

\begin{proof}
Even if $\g$ is not Leibniz algebra,
if it has a binary product,
then $\s\Leib(2)$ acts on $\g[1]^{\ot 2}$.
Hence by the same manner as above
a map $\d$ is well-defined.
\end{proof}
From (\ref{fderirule}),
$H_{1}(F_{Lie}\g[1],\d)=\g_{Lie}$.
So we consider the chain complex
$$
\DR\g[1]:=
\Big(
F_{Lie}^{\bullet}\g[1]\overset{\d}{\longrightarrow}\g_{Lie}
\overset{0}{\longrightarrow}0
\Big),
$$
where $\g[1]\overset{\d}{\longrightarrow}\g_{Lie}$
is the augmentation.
\begin{theorem}
The total space $\DR\g[1]=\g_{Lie}\oplus F_{Lie}\g[1]$
becomes a dg-Lie algebra and the Leibniz bracket
on the Lie-Leibniz algebra $F_{Lie}\g[1]$
is the derived bracket.
Therefore the inclusion,
$$
\iota:\g[1]\to\g_{Lie}\oplus\F_{Lie}\g[1],
$$
is a derived bracket representation.
This representation is universal,
namely, an arbitrary derived
bracket representation of $\g$ is factors through $\iota$.
\end{theorem}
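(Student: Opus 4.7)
The plan is to (i) build a graded Lie bracket and a differential on $\DR\g[1]=\g_{Lie}\oplus F_{Lie}\g[1]$, (ii) verify the dg Lie axioms, (iii) identify the derived bracket with the original Leibniz bracket, and (iv) establish universality via the freeness of $F_{Lie}$.

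For step (i), put the given Lie brackets on each summand: the Lie algebra structure on $\g_{Lie}$ and the free Lie bracket on $F_{Lie}\g[1]$. The mixed bracket is built from the left regular action: $L_{x}:=[x,-]$ is a derivation of the Leibniz bracket on $\g$, and $L_{I}=0$ because $[I,\g]=0$, so $L$ descends to $\g_{Lie}$. The Leibniz identity itself rewrites as $L_{[x,y]}=[L_{x},L_{y}]$, making $\g_{Lie}\to\End(\g)$ a Lie algebra morphism. Shifting to $\g[1]$ and extending uniquely by derivation of the Lie bracket gives an action $\g_{Lie}\to\Der(F_{Lie}\g[1])$, and we set $(a,y):=L_{a}y$. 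The Jacobi identity then holds by case analysis on how many arguments lie in each summand: the pure cases are given, while the mixed cases are exactly the derivation property and the Lie-morphism property of $L$. The differential is defined by $D|_{\g_{Lie}}=0$, $D|_{F_{Lie}^{\ge 2}\g[1]}=\d$, and $D|_{\g[1]}=p$ (Liezation); $D^{2}=0$ reduces to $\d\d=0$ (already proved) together with the fact that $\d(x_{1},x_{2})=[x_{1},x_{2}]+[x_{2},x_{1}]\in I[1]$ is killed by $p$.

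For step (ii), the graded derivation property $D(\alpha,\beta)=(D\alpha,\beta)+(-1)^{|\alpha|}(\alpha,D\beta)$ on $F_{Lie}^{\ge 2}\g[1]$ is the almost-derivation identity recorded in the excerpt. The remaining case $\alpha=x_{1},\beta=x_{2}\in\g[1]$ is a direct check: $D(x_{1},x_{2})=[x_{1},x_{2}]+[x_{2},x_{1}]$, while $(Dx_{1},x_{2})-(x_{1},Dx_{2})=(p(x_{1}),x_{2})-(x_{1},p(x_{2}))=L_{x_{1}}x_{2}+L_{x_{2}}x_{1}$ (the second term via graded antisymmetry of $(\cdot,\cdot)$), and both sides agree by definition of $L$. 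For arguments involving $\g_{Lie}$, one needs $D(L_{a}y)=L_{a}(Dy)$; on $y\in\g[1]$ this reads $p([x,y])=[p(x),p(y)]_{\g_{Lie}}$, i.e.\ $p$ being a Lie morphism, and it extends to $F_{Lie}^{\ge 2}\g[1]$ by induction on weight from the explicit formula (\ref{explid1}). Step (iii) is then immediate: for $x,y\in\g[1]$ the derived bracket is $(Dx,y)=(p(x),y)=L_{x}y=[x,y]$, so $\iota$ is a derived bracket representation.

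For step (iv), let $f:\g[1]\to\DC\h$ be a morphism of Leibniz algebras into a dg Lie algebra $(\h,(\cdot,\cdot)_{\h},d_{\h})$. Freeness of $F_{Lie}$ extends $f$ uniquely to a Lie morphism $\tilde f:F_{Lie}\g[1]\to\h$, and any dg Lie extension of $f$ to $\DR\g[1]$ must, by the chain-map property, send $p(x)\mapsto d_{\h}f(x)$; we take this as the definition of $\bar f:\g_{Lie}\to\h$. Well-definedness on $I$ follows from expanding $f([x_{1},x_{2}]+[x_{2},x_{1}])=(d_{\h}f(x_{1}),f(x_{2}))+(d_{\h}f(x_{2}),f(x_{1}))$ and applying $d_{\h}$: the remaining terms vanish by graded antisymmetry together with $d_{\h}^{2}=0$. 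Verifying that $\bar f\oplus\tilde f$ respects the Lie bracket on mixed arguments (by proving $(d_{\h}f(x),\tilde f(y))_{\h}=\tilde f(L_{x}y)$ inductively, using derivation on both sides) and the differential (directly on generators) is then routine, and uniqueness is forced by the construction. The main obstacle I anticipate is the identity $\d(L_{a}\alpha)=L_{a}\d\alpha$ on $F_{Lie}^{\ge 2}\g[1]$: morally this is a naturality statement, since both $\d$ and $L_{a}$ arise from the operadic decomposition $\LL=\Lie\odot\s\Leib$ and respect the underlying Leibniz data, but its combinatorial verification on (\ref{explid1}) demands careful bookkeeping of the signs coming from the degree shift and the distributive law, which is where the various structural pieces of the construction must interlock cleanly.
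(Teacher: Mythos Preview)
Your approach is essentially the same as the paper's: the semi-direct product Lie structure via the left action $L_{\bar x}=[x,-]$ extended by derivations, the differential built from $\d$ and the Liezation $p$, and the universal map defined on $\g_{Lie}$ by $\bar x\mapsto d_{\h}f(x)$ with well-definedness checked via the vanishing of $Df$ on symmetric brackets. The paper's presentation is terser---it states ``one can easily see that $\g_{Lie}\oplus F_{Lie}\g[1]$ becomes a dg Lie algebra'' and bundles all derivation checks into the single Claim that the differential on the total space is \emph{generated} from $\d:\g[1]\to\g_{Lie}$ (i.e.\ is the unique Lie derivation extending $x\mapsto\bar x$); this claim simultaneously absorbs your ``main obstacle'' $\d(L_{a}\alpha)=L_{a}\d\alpha$ and the mixed case $x\in\g[1]$, $\alpha\in F_{Lie}^{\ge 2}\g[1]$, which your case analysis does not explicitly list. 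That mixed case is easily read off from (\ref{explid1}) (the $i=1$ summands give $L_{x_{1}}\alpha$, the rest give $-(x_{1},\d\alpha)$), so this is a small organizational gap rather than a real one.
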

The derived bracket construction is the functor,
$\DC$, from the category of dg-Lie algebas
to the one of odd Leibniz algebras.
The theorem says that $\DR$ is the adjoint functor of $\DC$.
\begin{proof}
We denote by $\bar{x}:=p(x)$,
where $p:\g\to\g_{Lie}$ is the Liezation.\\
(A) The Lie algebra $\g_{Lie}$ acts on $\g[1]$ as follows.
\begin{equation}\label{defaction}
(\bar{x}_{1},x_{2}):=[x_{1},x_{2}],
\end{equation}
which is a representation of the Lie algebra.
Since $(\bar{x}_{1},-)$ is a linear map on $\g[1]$,
this operation
can be extended on $F_{Lie}\g[1]$ as a derivation
on the free Lie algebra.
Hence the semi-direct product plus the free Lie bracket,
$$
(\bar{x}_{1}\oplus\alpha_{1},\bar{x}_{2}\oplus\alpha_{2}):=
(\bar{x}_{1},\bar{x}_{2})\oplus(\bar{x}_{1},\alpha_{2})-
(\bar{x}_{2},\alpha_{1})+(\alpha_{1},\alpha_{2}),
$$
is a Lie bracket on $\g_{Lie}\oplus F_{Lie}\g[1]$.
One can easily see that $\g_{Lie}\oplus F_{Lie}\g[1]$
becomes a dg Lie algebra.
From (\ref{explid1}) and (\ref{defaction}),
we notice that
\begin{claim}\label{claiminproof}
The differential on $\g_{Lie}\oplus F_{Lie}\g[1]$
is generated from
$\d:\g[1]\longrightarrow\g_{Lie}$.
\end{claim}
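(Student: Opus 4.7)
The plan is to show that $\d$ on the total space $\g_{Lie}\oplus F_{Lie}\g[1]$ is the unique degree $+1$ derivation of the semi-direct Lie bracket whose restriction to $\g[1]$ is the Liezation $x\mapsto\bar{x}$ and whose restriction to $\g_{Lie}$ vanishes. Since a graded derivation of a Lie algebra is determined by its values on a set of Lie generators, and since $\g_{Lie}\oplus F_{Lie}\g[1]$ is generated as a Lie algebra by $\g[1]$ together with $\g_{Lie}$ (on which $\d$ vanishes by definition), once the derivation property is verified the claim is immediate.

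To verify the derivation property, I would induct on the length $n$ of the right-normalized bracket $\{x_{1},\dots,x_{n}\}\in F_{Lie}^{n}\g[1]$. The base case $n=2$ is a direct computation: applying the would-be derivation rule to $(x_{1},x_{2})$ gives, using (\ref{defaction}) and the antisymmetry of the Lie bracket on the semi-direct product,
$$
(\bar{x}_{1},x_{2})+(-1)^{|x_{1}|}(x_{1},\bar{x}_{2})
=[x_{1},x_{2}]+[x_{2},x_{1}],
$$
in agreement with (\ref{fderirule}). For the inductive step, decompose $\{x_{1},\dots,x_{n+1}\}=(x_{1},\{x_{2},\dots,x_{n+1}\})$ and compute
$$
\d\{x_{1},\dots,x_{n+1}\}
=(\bar{x}_{1},\{x_{2},\dots,x_{n+1}\})
-(x_{1},\d\{x_{2},\dots,x_{n+1}\}).
$$
The first summand unfolds via the fact that $(\bar{x}_{1},-)$ is a derivation of the free Lie bracket acting on generators by $(\bar{x}_{1},x_{j})=[x_{1},x_{j}]$; this accounts for all terms with $i=1$ in (\ref{explid1}). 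The second summand, evaluated by the inductive hypothesis, delivers the terms with $i\geq 2$. Matching these against (\ref{explid1}) closes the induction.

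The main obstacle is the bookkeeping of signs and, in particular, reproducing the symmetrized bracket $[x_{n},x_{n+1}]+[x_{n+1},x_{n}]$ appearing in the last term of (\ref{explid2}). This symmetrization arises precisely at the innermost stage of the induction, where the base case produces $[x_{n},x_{n+1}]+[x_{n+1},x_{n}]$ from $(x_{n},x_{n+1})$; the symmetrization is then transported back through the outer nested Lie brackets to land at the asymmetric position dictated by (\ref{explid1}). The remaining non-trivial check is that the extension as a derivation is consistent with the Jacobi identity of the semi-direct Lie algebra. This is automatic from two facts: on $F_{Lie}\g[1]$ the bracket is the free Lie bracket, on which $(\bar{x},-)$ is a genuine Lie derivation by construction; and the Jacobi relations involving $\g_{Lie}$ are built into the semi-direct product structure of part (A).
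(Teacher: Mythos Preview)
Your proposal is correct and is essentially an unpacking of the paper's own argument: the paper simply remarks ``From (\ref{explid1}) and (\ref{defaction}), we notice that'' the claim holds, and your induction on the length of the right-normalized bracket is precisely how one verifies that the derivation generated by $x\mapsto\bar{x}$ reproduces (\ref{explid1}). The only point to tidy is the framing: since $\d$ is already defined on $F_{Lie}\g[1]$ by the operadic construction, you do not need to worry about well-definedness of the extension as a derivation---what you are really proving is that the pre-existing $\d$ \emph{is} a derivation of the semi-direct Lie bracket, from which the generation statement is immediate.
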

\noindent(B)
On the universality.
Let $(\h,(.,.),D)$ be a dg Lie algebra
and let $f:\g[1]\to(\h,(D.,.))$
a derived bracket representation of $\g$.
We should prove that an arbitrary dg Lie algebra mapping
$$
\psi:\g_{Lie}\oplus F_{Lie}\g[1]\to(\h,(.,.),D)
$$
is factors through
$\iota:\g[1]\to\g_{Lie}\oplus F_{Lie}$
and $f=\psi\c\iota$.\\
(B1)
Since $F_{Lie}\g[1]$ is the free Lie algebra,
by its universality,
a Lie algebra morphism,
$\psi_{F}:F_{Lie}\g[1]\to\h$,
such that $\psi_{F}(x)=f(x)$ is uniquely well-defined,
where $x\in\g[1]$.\\
(B2) For any $\bar{x}\in\g_{Lie}$, we define a map
$\psi_{Lie}:\g_{Lie}\to\h$ as
$$
\psi_{Lie}(\bar{x}):=Df(x).
$$
We should check that $\psi_{Lie}$ is well-defined.
It suffices to show that $Df(I[1])=0$,
where $I[1]\subset\g[1]$ is the ideal
consisting of the symmetric brackets.
Because $f$ is a Leibniz homomorphism,
$f[x_{1},x_{2}]=(Df(x_{1}),f(x_{2}))$.
For any element of $I[1]$,
\begin{eqnarray*}
Df\big([x_{1},x_{2}]+[x_{2},x_{1}]\big)
&=&D(Df(x_{1}),f(x_{2}))+
D(Df(x_{2}),f(x_{1}))\\
&=&(Df(x_{1}),Df(x_{2}))+(Df(x_{2}),Df(x_{1}))\\
&=&(Df(x_{1}),Df(x_{2}))
-(Df(x_{1}),Df(x_{2}))=0.
\end{eqnarray*}
Therefore, $\psi_{Lie}$ is well-defined.\\
(B3)
We prove that
$\psi:=\psi_{Lie}\oplus\psi_{F}$
is a Lie algebra homomorphism.
Firstly,
$$
\psi(\bar{x}_{1},\bar{x}_{2})
=\psi_{Lie}\overline{[x_{1},x_{2}]}=
Df[x_{1},x_{2}]=(Df(x_{1}),Df(x_{2}))
=(\psi\bar{x}_{1},\psi\bar{x}_{2}).
$$
Secondly,
$$
\psi(\bar{x}_{1},x_{2})
=\psi_{F}[x_{1},x_{2}]=
f[x_{1},x_{2}]=(Df(x_{1}),f(x_{2}))
=(\psi(\bar{x}_{1}),\psi(x_{2}))
$$
and this implies that for any $\alpha\in F_{Lie}(\g[1])$,
$\psi(\bar{x},\alpha)=(\psi(\bar{x}),\psi(\alpha))$.\\
(B4)
Finally, we prove that $\psi$ is commutative with
the differentials, i.e., $\psi\d = D\psi$.
Thanks to the claim above,
it suffices to check the two cases of
$\psi\d(x)=D\psi(x)$ and
$\psi\d(\bar{x})=D\psi(\bar{x})$.
The first case is
$$
\psi\d(x)=\psi_{Lie}(\bar{x})=Df(x)=D\psi(x)
$$
and the second case is obvious,
because $\d\bar{x}=0$ and $DD=0$.
\end{proof}
Finally of this section, we observe
the second homology group, $H_{2}(F_{Lie}\g[1],\d)$.
The invariant bilinear form
in the category of Lie algebra
is a symmetric pairing $\langle.,.\rangle$
satisfying the invariant condition
\begin{equation}\label{classicalinvcon}
\langle[x,y],z\rangle=\langle x,[y,z]\rangle,
\end{equation}
where $[.,.]$ is a Lie bracket.
The universal invariant bilinear form on
a Lie algebra $\g$ is by definition the projection of
$\g\ot\g$ to $\Omega^{0}(\g)$, where
$$
\Omega^{0}(\g)
:=\g\ot\g/\{x\ot y-y\ot x,[x,y]\ot z-x\ot[y,z]\}.
$$
According to Kontsevich \cite{Kont},
$\Omega^{0}(\g)$ is regarded as the space
of formal functions (0-forms) over $\g$
as a formal Lie-manifold.
It is known that
(\ref{classicalinvcon}) (non-universal version)
is a consequence of a derived bracket construction
(cf. \cite{Kos0}, \cite{Roy}).
The universal version also comes from the derived
bracket construction, that is,
\begin{proposition}
If $\g$ is a Lie algebra, then
$H_{2}(F_{Lie}\g[1],\d)=\Omega^{0}(\g)$.
\end{proposition}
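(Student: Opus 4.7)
The plan is to compute $H_2(F_{Lie}\g[1],\d)$ directly by examining $\d$ in the bottom two degrees and matching the result to the defining presentation of $\Omega^0(\g)$.

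First, when $\g$ is a Lie algebra the Leibniz bracket is antisymmetric, so formula $(\ref{fderirule})$ gives $\d(x_1,x_2)=[x_1,x_2]+[x_2,x_1]=0$; thus $\d$ vanishes on $F_{Lie}^2\g[1]$ and $H_2(F_{Lie}\g[1],\d)$ is simply the cokernel of $\d\colon F_{Lie}^3\g[1]\to F_{Lie}^2\g[1]$. Next, I would identify $F_{Lie}^2\g[1]$ with the (ungraded) symmetric square $S^2\g$. Because $\g[1]$ is concentrated in odd degree, the graded antisymmetry of the free-Lie bracket gives $(x,y)=(y,x)$ for $x,y\in\g[1]$; equivalently, as an $S_2$-coinvariant, $\Lie(2)\otimes_{S_2}\g[1]^{\ot 2}$ collapses onto $S^2\g$ via $(x,y)\mapsto x\cdot y$.

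Then, using $(\ref{fderirule2})$ together with the Lie antisymmetry $[x_2,x_3]+[x_3,x_2]=0$, I would compute
$$\d(x_1,(x_2,x_3))=([x_1,x_2],x_3)+(x_2,[x_1,x_3]),$$
which under the identification with $S^2\g$ becomes $[x_1,x_2]\cdot x_3+x_2\cdot[x_1,x_3]$. Substituting $x_1=y,\ x_2=z,\ x_3=x$ and using Lie antisymmetry together with the symmetry of $\cdot$, this element equals $-\bigl([x,y]\cdot z - x\cdot[y,z]\bigr)$, namely the standard invariance relation defining $\Omega^0(\g)=S^2\g/\langle [x,y]\cdot z-x\cdot[y,z]\rangle$. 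Running the substitution in reverse shows the two families of generators span the same subspace, so $\Im\d$ is exactly the invariance submodule and the quotient $F_{Lie}^2\g[1]/\Im\d$ is canonically $\Omega^0(\g)$.

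The substantive step is therefore the routine but careful verification that the set $\{\d(x_1,(x_2,x_3)):x_i\in\g\}$ linearly spans the full invariance submodule of $S^2\g$ and not merely a proper subspace; the remainder is Koszul-sign bookkeeping to realize the free Lie algebra on odd generators as the symmetric algebra. I expect this matching of two explicitly given subspaces of $S^2\g$ to be the main, though elementary, obstacle.
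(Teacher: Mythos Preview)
Your proposal is correct and follows essentially the same line as the paper's own proof: identify $F_{Lie}^2\g[1]\cong S^2\g$ via the odd degree, observe $\d=0$ there when $\g$ is Lie, and then compute $\d(x_1,(x_2,x_3))$ from (\ref{fderirule2}) to see that its image is exactly the invariance submodule, yielding $\Omega^0(\g)$ as the quotient. The paper performs the same substitution (writing $\d(y,(x,z))\sim -[x,y]\otimes z + x\otimes[y,z]$) in one line, so your worry about the image being a proper subspace is already resolved by the bijective change of variables you yourself indicate.
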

\begin{proof}
Because $\g[1]$ is an odd space,
$F^{2}_{Lie}\g[1]$ is the same as
the symmetric tensor space,
$S^{2}\g:=\g\ot\g/\{x\ot y-y\ot x\}$,
and because the bracket is Lie, $\d=0$ on $S^{2}\g$.
On the other hand,
$$
\d(y,(x,z))=([y,x],z)+(x,[y,z])\sim
-[x,y]\ot z+x\ot[y,z].
$$
Therefore, the identity of the corollary holds.
\end{proof}
Let us consider the Leibniz case.
In this case,
the universal symmetric bilinear form is not defined
on $S^{2}\g$, but on $\Ker_{2}\d$,
and the target space of the bilinear form is
$H_{2}(F_{Lie}\g[1],\d)=\Ker_{2}\d/\Im_{2}\d$.
From (\ref{fderirule2}),
the invariant condition has the following form.
\begin{equation}\label{invcou}
([x,y],z)+(y,[x,z])=(x,[y,z]+[z,y]),
\end{equation}
which is the same as the invariant condition
for Courant algebroids (see \cite{Kos3} for the details).
The meaning of (\ref{invcou}) is clear.
If $\L$ is a Lie subalgebra of the Leibniz algebra $\g$,
then $S^{2}\L$ is a subspace of $\Ker_{2}\d$
and then (\ref{invcou}) reduces to the classical
formula over $\L$.
Namely, (\ref{invcou}) is
the relation for the Lie subalgebras of $\g$.
%We recall a special Leibniz algebra introduced
%in Weinstein \cite{W}.
%For any vector space $V$,
%$\End(V)\ltimes V$ becomes a Leibniz algebra.
%The bracket is defined as follows.
%$$
%[f_{1}\oplus v_{1},f_{2}\oplus v_{2}]
%:=[f_{1},f_{2}]\oplus f_{1}(v_{2}),
%$$
%where $f_{1},f_{2}\in\End(V)$ and $v_{1},v_{2}\in V$.
%An arbitrary Lie algebra structure on $V$
%can be embedded in $\End(V)\ltimes V$
%as a maximal isotropic Lie subalgebra
%(for the details see \cite{W}).
%Hence $\End(V)\ltimes V$ is called
%an omni-Lie algebra over $V$.
%Following the philosophy of Weinstein,
%it can be said that
%\begin{proposition}
%$H^{2}(F_{Lie}(\End(V)\ltimes V)[1],\d)$
%includes every $\Omega^{0}(\L)=\lambda(\L)$
%defined over $V$, where $\L=V$.
%\end{proposition}
%We here define the notion of
%invariant bilinear form for Leibniz algebras.
%\begin{definition}
%Invariant bilinear forms in the category of Leibniz
%algebras are nondegenerate symmetric pairings
%satisfying (\ref{invcou}).
%\end{definition}
%\begin{proposition}[reduction]
%Let $\g$ be a Leibniz algebra equipped with
%an invariant bilinear form $(.,.)$.
%Then the ideal $I:=\{[x,y]+[y,x]\}$
%is the isotropic subspace, i.e., $(I,I)=0$.
%The orthogonal space $I^{\bot}$, $(I,I^{\bot})=0$,
%is a Leibniz subalgebra of $\g$.
%The quotient Lie algebra $I^{\bot}/I$
%has the induced invariant bilinear form.
%\end{proposition}

%%%%%%%%%%%%%%%%%%%%%%%%%%%%%%%%
\section{Anti-cyclic subcomplex}
%%%%%%%%%%%%%%%%%%%%%%%%%%%%%%%%

The aim of this section is to describe how the complex
$(F_{Lie}\g[1],\d)$ relates with the (co)homology
complex of Leibniz algebra.
In 4.1 we will prove that $(F_{Lie}\g[1],\d)$
is a subcomplex of Leibniz homology
complex and $\d$ is the same
as the boundary map of Loday.
In 4.2 we will introduce the notion of anti-cyclic cochain
for Leibniz algebras by analogy with cyclic cochains
for associative algebras
and prove that the set of anti-cyclic cochains
is a subcomplex of the cohomology complex
of Loday-Phirashvili.

\subsection{Homology side}

Let $\g$ be a Leibniz algebra.
The complex over $\g$ computing the Leibniz homology
group is the tensor space
$\bar{T}\g[1]=\bigoplus_{n\in\mathbb{N}}\g[1]^{\ot n}$
with the boundary map,
\begin{equation}\label{defpaL}
\pa_{L}(x_{1},...,x_{n})
:=\sum_{1\le i<j\le n}(-1)^{i-1}(x_{1},...,x_{i}^{\vee},...,[x_{i},x_{j}],x_{j+1},...,x_{n}).
\end{equation}
The definition of $\pa_{L}$ is the left-version
of Loday's original formula.
We call $(\bar{T}\g[1],\pa_{L})$ a Loday complex.
The free Lie algebra $F_{Lie}\g[1]$ is regarded
as a subspace of the tensor space via the commutator,
\begin{equation}\label{freecommutator}
\{x_{1},...,x_{n}\}=x_{1}\ot\{x_{2},...,x_{n}\}-
(-1)^{n-1}\{x_{2},...,x_{n}\}\ot x_{1},
\end{equation}
where $\{x_{1},...,x_{n}\}$ is the right-normalized
bracket used in (\ref{explid1}).
\begin{theorem}
$\pa_{L}=\d$ on $F_{Lie}\g[1]$.
\end{theorem}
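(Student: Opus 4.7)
My plan is to induct on the length $n$ of the right-normalized bracket. The base case $n=2$ is immediate: by (\ref{freecommutator}) we have $\{x_1, x_2\} = x_1 \ot x_2 + x_2 \ot x_1$ in $\bar{T}\g[1]$, and $\pa_L$ sends this to $[x_1, x_2] + [x_2, x_1] = \d\{x_1, x_2\}$ by (\ref{fderirule}).

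For the inductive step, set $T := \{x_2, \ldots, x_{n+1}\}$ and use (\ref{freecommutator}) to write $\{x_1, \ldots, x_{n+1}\} = x_1 \ot T - (-1)^n T \ot x_1$. Splitting the sum (\ref{defpaL}) according to whether the removed index $i$ is the leftmost slot (for $x_1 \ot T$) or the paired index $j$ is the rightmost slot (for $T \ot x_1$) yields the quasi-derivation identities
\begin{align*}
\pa_L(x_1 \ot T) &= \lambda(x_1)(T) - x_1 \ot \pa_L(T),\\
\pa_L(T \ot x_1) &= \pa_L(T) \ot x_1 + \rho(x_1)(T),
\end{align*}
where $\lambda(x_1)$ is the derivation of $\bar{T}\g[1]$ extending $y \mapsto [x_1, y]$, and $\rho(x_1)(T)$ collects the end-term contributions from pairing the rightmost slot of $T \ot x_1$ with each earlier one. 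Combining and repackaging $x_1 \ot \pa_L(T) + (-1)^n \pa_L(T) \ot x_1 = \{x_1, \pa_L(T)\}$ via (\ref{freecommutator}) produces
\begin{equation*}
\pa_L\{x_1,\ldots,x_{n+1}\} = \lambda(x_1)(T) - \{x_1, \pa_L(T)\} - (-1)^n \rho(x_1)(T).
\end{equation*}

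Applying the inductive hypothesis $\pa_L(T) = \d(T)$ and expanding $\d(T)$ by (\ref{explid2}), the first two summands reassemble into exactly the right-hand side of (\ref{explid2}) for $\{x_1,\ldots,x_{n+1}\}$: the derivation term $\lambda(x_1)(T)$ supplies the $i = 1$ portion (the Leibniz identity on $\g$ is precisely the statement that $[x_1, -]$ is a derivation of nested right-normalized brackets), while $-\{x_1, \pa_L(T)\}$ supplies the $i \geq 2$ portion together with the symmetric tail $[x_n, x_{n+1}] + [x_{n+1}, x_n]$. Thus the theorem reduces to the assertion $\rho(x_1)(T) = 0$ for every $T \in F_{Lie}^n\g[1]$.

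I expect this final vanishing to be the main obstacle. It is a structural property of the free Lie algebra on odd generators: $\rho(x_1)(T)$ factors through the signed ``move-to-the-end'' operator $\sigma_n(u_1 \ot \cdots \ot u_n) := \sum_i (-1)^{i-1} u_1 \ot \cdots \hat{u_i} \cdots \ot u_n \ot u_i$, and one must verify $\sigma_n|_{F_{Lie}^n\g[1]} = 0$. I would prove this by a secondary induction using (\ref{freecommutator}), tracking how $\sigma_n$ interacts with the recursive commutator expansion; the graded symmetry of the Lie bracket on odd-degree generators causes all intermediate contributions to telescope, reflecting the anti-cyclicity of the Leibniz operad emphasized in the introduction. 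Once $\sigma_n|_{F_{Lie}^n} = 0$ is in place, the main induction closes and the equality $\pa_L = \d$ follows.
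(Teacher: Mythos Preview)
Your argument is correct and matches the paper's proof almost step for step: the paper also inducts on the length of the right-normalized bracket, splits $\{x_{1},\ldots,x_{n}\}$ via (\ref{freecommutator}), and isolates the same two quasi-derivation identities (its Lemmas~4.2 and~4.3 are precisely your formulas for $\pa_{L}(T\ot x_{1})$ and $\pa_{L}(x_{1}\ot T)$). The only cosmetic difference is that the paper proves the tail-vanishing $\rho(x_{1})(T)=0$ directly---showing by one further unfolding of (\ref{freecommutator}) that each $[x_{k},x_{1}]$ contribution occurs twice with opposite sign---whereas you factor it through the bracket-free operator $\sigma_{n}$ and assert $\sigma_{n}|_{F_{Lie}^{n}}=0$; the recursive cancellation is the same, and your formulation has the minor advantage of making explicit that this step uses nothing about the Leibniz product on~$\g$. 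One small remark: the equality $\lambda(x_{1})(\{x_{2},\ldots,x_{n+1}\})=\sum_{j}\{x_{2},\ldots,[x_{1},x_{j}],\ldots,x_{n+1}\}$ does not actually require the Leibniz identity on~$\g$---it follows already from $\lambda(x_{1})$ being a derivation of the tensor product, hence of the graded commutator---so your parenthetical invoking the Leibniz identity there is harmless but unnecessary.
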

As a result, the free Lie algebra is a subcomplex
of Loday complex.
\begin{proof}
Obviously $\pa_{L}(x_{1},x_{2})=\d(x_{1},x_{2})$.
\begin{lemma}
$\pa_{L}(\{x_{2},...,x_{n}\}\ot x_{1})=(\pa_{L}\{x_{2},...,x_{n}\})\ot x_{1}$
\end{lemma}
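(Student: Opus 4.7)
The plan is to recast the discrepancy $\pa_L(\alpha\ot x_1)-\pa_L(\alpha)\ot x_1$ as an auxiliary operator depending on a ``spacer'' tensor, and then to prove its vanishing on right-normalized brackets by induction on the length. Concretely, for $\alpha=(\alpha_1,\dots,\alpha_m)\in\g[1]^{\ot m}$, any $w\in\bar{T}\g[1]$ and any $y\in\g[1]$, I would set
$$
\Phi_y^w(\alpha):=\sum_{i=1}^m(-1)^{i-1}(\alpha_1,\dots,\alpha_i^\vee,\dots,\alpha_m,w,[\alpha_i,y]).
$$
Inspecting formula (\ref{defpaL}), the terms of $\pa_L(\alpha\ot y)$ in which the trailing $y$ is itself bracketed are precisely $\Phi_y^\emptyset(\alpha)$, while the remaining terms assemble to $\pa_L(\alpha)\ot y$. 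Thus the lemma reduces to the identity $\Phi_{x_1}^\emptyset(\{x_2,\dots,x_n\})=0$.

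My strategy is to prove the \emph{strengthened} vanishing $\Phi_y^w(\{x_2,\dots,x_n\})=0$ for every $y$, every $w$, and every $n\ge 3$, by induction on $n$. The base case $n=3$ is a direct check: $\{x_2,x_3\}=x_2\ot x_3+x_3\ot x_2$ produces four summands of $\Phi_y^w$ which cancel pairwise. For the inductive step I would apply the commutator recursion $\{x_2,\dots,x_n\}=x_2\ot\gamma-(-1)^{n-2}\gamma\ot x_2$, with $\gamma:=\{x_3,\dots,x_n\}$. Splitting $\Phi_y^w$ according to whether the removed position is the external $x_2$ or lies inside $\gamma$, routine bookkeeping yields
$$
\Phi_y^w(x_2\ot\gamma)=\gamma\ot w\ot[x_2,y]-x_2\ot\Phi_y^w(\gamma),
$$
$$
\Phi_y^w(\gamma\ot x_2)=\Phi_y^{x_2\ot w}(\gamma)+(-1)^{n-2}\gamma\ot w\ot[x_2,y].
$$
Combining these two with the signs dictated by the recursion, the ``outer'' contributions $\gamma\ot w\ot[x_2,y]$ cancel against each other and what remains is
$$
\Phi_y^w(\{x_2,\dots,x_n\})=-x_2\ot\Phi_y^w(\gamma)-(-1)^{n-2}\Phi_y^{x_2\ot w}(\gamma),
$$
which vanishes by the induction hypothesis applied to $\gamma$ (of length $n-2\ge 2$) with the spacers $w$ and $x_2\ot w$ respectively. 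Specialising to $w=\emptyset$ delivers the stated lemma.

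The main obstacle is conceptual rather than computational, and lies in choosing the right inductive statement: the naive claim $\Phi_y^\emptyset(\alpha)=0$ does not close the induction, because expanding the recursive piece $\gamma\ot x_2$ unavoidably places an extra factor of $x_2$ between the truncated word and $[\gamma_i,y]$, producing terms that cannot be rewritten as $\Phi_y^\emptyset$ of anything simpler. Parameterising the claim by an arbitrary spacer $w$ is exactly what absorbs this inserted $x_2$ into the enlarged spacer $x_2\ot w$, making the induction self-contained; once this observation is made, the calculation itself is a direct unwinding of the definitions.
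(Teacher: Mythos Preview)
Your proof is correct and follows the same underlying route as the paper: both isolate the ``extra'' terms $T=\Phi_{x_1}^{\emptyset}(\{x_2,\dots,x_n\})$ and kill them by unwinding the recursion $\{x_2,\dots,x_n\}=x_2\ot\{x_3,\dots,x_n\}-(-1)^{n-2}\{x_3,\dots,x_n\}\ot x_2$. The paper checks that the two occurrences of $[x_2,x_1]$ cancel and then dismisses the rest with ``by repeating the same discussion, $T=0$''; your spacer parameter $w$ is exactly what is needed to turn that phrase into a genuine induction, since the second summand $\{x_3,\dots,x_n\}\ot x_2\ot x_1$ forces an $x_2$ between the truncated word and the bracket $[\cdot,x_1]$. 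So your argument is not a different proof but a careful completion of the paper's sketch, and your diagnosis of the ``main obstacle'' pinpoints precisely the step the paper leaves implicit.
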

\begin{proof}
From the defining equation of $\pa_{L}$,
we have
$$
\pa_{L}(\{x_{2},...,x_{n}\}\ot x_{1})
=(\pa_{L}\{x_{2},...,x_{n}\})\ot x_{1}+T
$$
where $T$ is the term which has $[-,x_{1}]$,
$$
T=\sum(,...,x_{i}^{\vee},...,)\ot[x_{i},x_{1}].
$$
We should prove $T=0$.
From (\ref{freecommutator}),
\begin{equation}\label{41p1}
\{x_{2},...,x_{n}\}\ot x_{1}
=x_{2}\ot\{x_{3},...,x_{n}\}\ot x_{1}
-(-1)^{n-2}\{x_{3},...,x_{n}\}\ot x_{2}\ot x_{1}.
\end{equation}
Therefore,
$[x_{2},x_{1}]$ appears in $T$ in two ways.
One is from the first term of (\ref{41p1})
$$
(-1)^{n-2}(-1)^{n-2}\{x_{3},...,x_{n}\}\ot[x_{2},x_{1}]
$$
and the other is from the second term
$$
-(-1)^{n-2}(-1)^{n-2}\{x_{3},...,x_{n}\}\ot[x_{2},x_{1}].
$$
Because the sign is reverse to each other,
the terms with $[x_{2},x_{1}]$ vanish.
By repeating the same discussion, we obtain $T=0$.
\end{proof}
From (\ref{defpaL}), it is easy to see through
\begin{lemma}
$$
\pa_{L}(x_{1}\ot\{x_{2},...,x_{n}\})=\\
\sum_{2\le i\le n}\{x_{2},...,[x_{1},x_{i}],...,x_{n}\}-
x_{1}\ot\pa_{L}\{x_{2},...,x_{n}\}.
$$
\end{lemma}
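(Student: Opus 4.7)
The plan is to split the Loday boundary applied to $x_1\ot\{x_2,...,x_n\}$ according to whether the summation index $i$ in (\ref{defpaL}) equals $1$ or is at least $2$, and to treat the two pieces separately.

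For the terms with $i\ge 2$, the bracket $[x_i,x_j]$ involves only slots coming from $\{x_2,...,x_n\}$, and a reindexing $i\mapsto i-1$ absorbs the sign change $(-1)^{i-1}\mapsto-(-1)^{(i-1)-1}$ and factors an $x_1\ot$ out on the left. This piece therefore equals $-x_1\ot\pa_L(\{x_2,...,x_n\})$; the argument goes through with any tensor in place of $\{x_2,...,x_n\}$.

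For the terms with $i=1$, every summand has a bracket $[x_1,x_j]$ inserted in place of the $j$-th slot of the tensor $\{x_2,...,x_n\}$, and the overall sign $(-1)^{1-1}$ is trivial. This piece is precisely the value on $\{x_2,...,x_n\}$ of the degree-zero derivation $L_{x_1}$ of the tensor algebra $T(\g[1])$ extending $[x_1,-]:\g[1]\to\g[1]$. Being a derivation of the associative tensor algebra, $L_{x_1}$ restricts to a graded derivation of the free Lie subalgebra $F_{Lie}\g[1]$. An induction on $n$ using the Leibniz rule of $L_{x_1}$ applied to the right-normalized bracket $\{x_2,...,x_n\}=(x_2,\{x_3,...,x_n\})$ then yields
\begin{equation*}
L_{x_1}(\{x_2,...,x_n\})=\sum_{2\le i\le n}\{x_2,...,[x_1,x_i],...,x_n\}.
\end{equation*}
Adding the two contributions gives the identity of the lemma. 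The only delicate point is the sign bookkeeping for the odd generators of $\g[1]$: one must confirm that $L_{x_1}$ distributes across the nested right-normalized brackets without producing extra signs. This is automatic because $L_{x_1}$ is of degree zero, so the graded Leibniz rule collapses to the ordinary one.
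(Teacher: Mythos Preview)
Your argument is correct and is precisely the direct verification the paper has in mind: in the text the lemma is simply asserted as immediate from the definition (\ref{defpaL}), with no proof given. Your splitting into the $i=1$ and $i\ge 2$ summands, together with the observation that $L_{x_1}$ is a degree-zero derivation of the tensor algebra (hence of the commutator bracket on $F_{Lie}\g[1]$), spells out exactly the computation the paper leaves implicit.
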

Therefore,
\begin{multline*}
\pa_{L}\{x_{1},...,x_{n}\}=\\
\sum_{2\le i\le n}\{x_{2},...,[x_{1},x_{i}],...,x_{n}\}-
x_{1}\ot\pa_{L}\{x_{2},...,x_{n}\}
-(-1)^{n-1}(\pa_{L}\{x_{2},...,x_{n}\})\ot x_{1}.
\end{multline*}
By assumption of induction,
$\pa_{L}\{x_{2},...,x_{n}\}=\d\{x_{2},...,x_{n}\}$.
Hence
\begin{eqnarray*}
\pa_{L}\{x_{1},...,x_{n}\}&=&
\sum_{2\le i\le n}\{x_{2},...,[x_{1},x_{i}],...,x_{n}\}
-\{x_{1},\d\{x_{2},...,x_{n}\}\}\\
&=&\d\{x_{1},...,x_{n}\}
\end{eqnarray*}
The proof is completed.
\end{proof}
In the following we denote by
$HA_{\bullet-1}(\g):=H_{\bullet}(F_{Lie}\g[1],\d)$.
Hence $HA_{0}(\g)=\g_{Lie}$ and if $\g$ is Lie,
then $HA_{1}(\g)=\Omega^{0}(\g)$.

%%%%%%%%%%%%%%%%%%%%%%%%%%%%
\subsection{Cohomology side}
%%%%%%%%%%%%%%%%%%%%%%%%%%%%

We recall the cohomology complex for
Leibniz algebra \cite{LP}.
Let $\g$ be a Leibniz algebra
and $M$ a $\g$-module or representation of $\g$.
The cochain complex which computes the cohomology group
of $\g$ with coefficients in $M$ is
$$
LP(\g,M)
:=\Hom_{\mathbb{K}}(\g[1],M[1])
$$
equipped with a differential defined by
\begin{multline}\label{df}
(d_{LP}f)(x_{1},...,x_{n+1})
:=
[f(x_{1},...,x_{n-1}),x_{n+1}]+
\sum_{i=1}^{n}
(-1)^{i+n}
[x_{i},f(x_{1},...,x_{i}^{\vee},...,x_{n+1})]\\
-\sum_{i<j\le n+1}(-1)^{i+n}
f(x_{1},...,x_{i}^{\vee},...,[x_{i},x_{j}],x_{j+1},...,x_{n+1}),
\end{multline}
where $f\in LP^{n}(\g,M)$ and $|f|:=n-1$.
This definition of the derivation is
the left-version of the original formula
introduced in \cite{LP}.
\medskip\\
\indent
Let $LP^{n}(\g)$ be the space of $n+1$-linear functions
on the tensor space $\bar{T}\g[1]$,
$$
LP^{n}(\g):=\Hom(\g[1]^{\ot n+1},\mathbb{K}).
$$
The differential $d_{LP}$ can be extended
on $LP^{\bullet}(\g)$ by the following manner,
\begin{multline}\label{defblp}
(b_{LP}\tilde{f})(x_{1},...,x_{n+1},x_{n+2}):=
(-1)^{n}
\tilde{f}(x_{1},...,x_{n},[x_{n+1},x_{n+2}]+[x_{n+2},x_{n+1}])+\\
\sum_{\substack{i<j \\ i\le n}}(-1)^{i-1}\tilde{f}(x_{1},...,x_{i}^{\vee},...,
[x_{i},x_{j}],x_{j+1},...,x_{n+2}),
\end{multline}
where $\tilde{f}\in LP^{n}(\g)$.
When $\g$ is finite dimensional and
$f\in LP^{n}(\g,\g^{*})$, if
we put
$$
\ti{f}(x_{1},...,x_{n+1})
:=\omega(f(x_{1},...,x_{n}),x_{n+1}),
$$
then $b_{LP}\ti{f}=(-1)^{n}\widetilde{d_{LP}f}$,
where $\omega$ is the canonical structure in (\ref{defomega}).\\
\indent
Now we define the notion of anti-cyclic cochain.
Before giving a general definition, let us observe
the elementary case.
Let $\g$ be a finite dimensional Leibniz algebra.
Consider an Abelian extension of $\g$ by $\g^{*}$,
$$
\begin{CD}
0@>>>\g^{*}@>>>\g\oplus\g^{*}@>>>\g @>>>0.
\end{CD}
$$
In general the Leibniz bracket on the middle position
has the following form,
$$
[x_{1}\oplus a_{1},x_{2}\oplus a_{2}]=[x_{1},x_{2}]
\oplus[x_{1},a_{2}]+[a_{1},x_{2}]+H(x_{1},x_{2}),
$$
where $x_{1},x_{2}\in\g$,
$a_{1},a_{2}\in\g^{*}$
and $H$ is a 2-cocycle in $LP^{2}(\g,\g^{*})$.
This bracket satisfies the anti-invariant condition
(\ref{A1})-(\ref{A2}) if and only if
\begin{eqnarray}
\label{H1}\widetilde{H}(x_{1},x_{2},x_{3})&=&
\widetilde{H}(x_{1},x_{3},x_{2}),\\
\label{H2}\oint\widetilde{H}(x_{1},x_{2},x_{3})&=&0,
\end{eqnarray}
where $\oint$ is the cyclic permutation for $x_{1},x_{2},x_{3}$.
We notice that the symmetry that $\widetilde{H}$
satisfies is the same as the one of the Lie bracket
$(x_{1},(x_{2},x_{3}))$,
where $|x_{i}|=odd$ for any $i\in\{1,2,3\}$.
This observation leads us to the following definition.
In the following we denote by
$\epsilon:F_{Lie}\g[1]\hookrightarrow\bar{T}\g[1]$
the embedding of the free Lie algebra.
\begin{definition}
Let $\g$ be a Leibniz algebra not necessarily finite
dimensional.
An $n$-cochain $A(x_{1},...,x_{n+1})\in LP^{n}(\g)$
is called an anti-cyclic cochain,
or shortly ac-cochain, if
\begin{equation}\label{defA1}
A(x_{1},...,x_{n+1})=\frac{1}{n+1}A\epsilon\{x_{1},...,x_{n+1}\},
\end{equation}
where $A\epsilon$ is the pull-back of $A$ by
$\epsilon$.
\end{definition}
As a result, $A(x_{1},...,x_{n+1})$
satisfies the same symmetry as
$\{x_{1},...,x_{n+1}\}$.
For example, if $A$ is an ac 1-cochain,
then it is a symmetric tensor,
$A(x_{1},x_{2})=A(x_{2},x_{1})$, because
$$
A(x_{1},x_{2})
=\frac{1}{2}A\epsilon\{x_{1},x_{2}\}
=\frac{1}{2}A\epsilon\{x_{2},x_{1}\}
=A(x_{2},x_{1}).
$$
If $A$ is ac 2, then it satisfies (\ref{H1})-(\ref{H2})
above. If $A$ is ac 3,
it has more complicated symmetry,
\begin{eqnarray*}
A(x_{1},x_{2},x_{3},x_{4})&=&A(x_{1},x_{2},x_{4},x_{3}),\\
\oint_{234}A(x_{1},x_{2},x_{3},x_{4})&=&0,\\
A(x_{1},x_{2},x_{3},x_{4})+A(x_{2},x_{1},x_{3},x_{4})
&=&
-A(x_{3},x_{4},x_{1},x_{2})-A(x_{4},x_{3},x_{1},x_{2}),
\end{eqnarray*}
where $\oint_{234}$ is the cyclic permutation
for $x_{2},x_{3},x_{4}$
and the last identity comes from
$\{x_{1},x_{2},x_{3},x_{4}\}
=\{(x_{1},x_{2}),(x_{3},x_{4})\}
-\{x_{2},x_{1},x_{3},x_{4}\}$.
Let us denote by $ALP^{\bullet}(\g)$
the space of ac-cochains.
\begin{lemma}[Implicit definition]\label{impdef}
$A$ is in $ALP^{n}(\g)$
if and only if
there exists a linear function,
$A^{\p}$, on $F_{Lie}^{n+1}\g[1]$ and
\begin{equation}\label{defA2}
A(x_{1},...,x_{n+1})=A^{\p}\{x_{1},...,x_{n+1}\}.
\end{equation}
\end{lemma}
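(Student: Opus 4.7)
The plan is to prove the two directions separately. The $(\Rightarrow)$ direction is essentially a one-line observation: setting $A^{\p} := \frac{1}{n+1}\, A\c\epsilon$, restricted to $F_{Lie}^{n+1}\g[1]$, the defining equation (\ref{defA1}) immediately reads $A(x_1,...,x_{n+1}) = A^{\p}\{x_1,...,x_{n+1}\}$, which is the desired implicit form.

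For the $(\Leftarrow)$ direction, suppose one is given a linear $A^{\p}$ on $F_{Lie}^{n+1}\g[1]$ with $A(x_1,...,x_{n+1}) = A^{\p}\{x_1,...,x_{n+1}\}$; the task is to recover the symmetry (\ref{defA1}). First I would iterate the graded commutator formula (\ref{freecommutator}) to expand the embedded right-normalized bracket inside the tensor algebra,
$$
\epsilon\{x_1,...,x_{n+1}\} = \sum_{\sigma\in S_{n+1}} c_{\sigma}\, x_{\sigma(1)}\ot\cdots\ot x_{\sigma(n+1)},
$$
with explicit signs $c_{\sigma}$. Applying $A$ termwise and substituting the hypothesis gives
$$
A\epsilon\{x_1,...,x_{n+1}\} = \sum_{\sigma} c_{\sigma}\, A(x_{\sigma(1)},...,x_{\sigma(n+1)}) = A^{\p}\Bigl(\sum_{\sigma} c_{\sigma}\,\{x_{\sigma(1)},...,x_{\sigma(n+1)}\}\Bigr).
$$

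To close the argument I would invoke the graded Dynkin-Specht-Wever theorem: on the free Lie algebra $F_{Lie}\g[1]$, the right-bracketing operator $x_1\ot\cdots\ot x_m\mapsto\{x_1,...,x_m\}$ acts as multiplication by $m$ on the subspace of Lie elements of tensor degree $m$. Since $\epsilon\{x_1,...,x_{n+1}\}$ is itself a Lie element of tensor degree $n+1$, its right-bracketing equals $(n+1)\{x_1,...,x_{n+1}\}$, so the bracketed sum in the previous display is $(n+1)\{x_1,...,x_{n+1}\}$. Substituting back yields $A\epsilon\{x_1,...,x_{n+1}\} = (n+1)\,A(x_1,...,x_{n+1})$, which is precisely (\ref{defA1}). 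The main obstacle is the clean invocation of Dynkin-Specht-Wever in the graded context: the Koszul signs in the expansion of $\epsilon\{x_1,...,x_{n+1}\}$ must match those produced by the right-bracketing operator acting on that tensor expansion, and one should verify that the standard characteristic-zero argument transports without surprise to the symmetric monoidal setting of graded vector spaces concentrated in odd degree $-1$.
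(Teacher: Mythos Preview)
Your proposal is correct and matches the paper's own argument: the forward direction is identical, and for the converse the paper also reduces to the identity $\{,\}\epsilon\{,\}=(n+1)\{,\}$ on degree-$(n+1)$ Lie words, which is exactly the graded Dynkin--Specht--Wever statement you invoke. The only difference is that the paper proves this identity directly by a short induction in the Appendix (using (\ref{freecommutator}) and the Jacobi identity), thereby handling the graded-sign verification you flag rather than appealing to DSW as a black box.
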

\begin{proof}
If $A$ is an ac-cochain, $A^{\p}:=\frac{1}{n+1}A\epsilon$.
The converse is also easy (See Appendix).
\end{proof}

\begin{theorem}
$ALP^{\bullet}(\g)$ is a subcomplex of
$(LP^{\bullet}(\g),b_{LP})$.
\end{theorem}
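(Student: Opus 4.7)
The plan is to use the implicit characterization of anti-cyclic cochains (Lemma \ref{impdef}) together with a direct comparison of the two formulas (\ref{defblp}) for $b_{LP}$ and (\ref{explid2}) for $\d$. If $A \in ALP^n(\g)$, I will write $A = A^{\p}\circ\{\cdot,\ldots,\cdot\}$ for some linear functional $A^{\p}$ on $F_{Lie}^{n+1}\g[1]$, and then exhibit an explicit witness for $b_{LP}A$ of the same form, namely $B^{\p}:=A^{\p}\c\d$. By Lemma \ref{impdef}, this will suffice.

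First I would fix $A\in ALP^n(\g)$ and compute $(b_{LP}A)(x_1,\ldots,x_{n+2})$ using (\ref{defblp}). The output is a sum of two pieces: one sum over pairs $i<j$ with $i\le n$ of terms $(-1)^{i-1}A(x_1,\ldots,x_i^{\vee},\ldots,[x_i,x_j],x_{j+1},\ldots,x_{n+2})$, plus the boundary term $(-1)^n A(x_1,\ldots,x_n,[x_{n+1},x_{n+2}]+[x_{n+2},x_{n+1}])$. I would then replace each $A(\ldots,y_1,\ldots,y_{n+1},\ldots)$ by $A^{\p}\{y_1,\ldots,y_{n+1}\}$, which is legitimate because the identity $A = A^{\p}\c\{\cdot\}$ holds on all inputs in $\g^{\otimes n+1}$ (here the arguments $y_k$ include expressions of the form $[x_i,x_j]$, but by linearity this causes no trouble).

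At this point the second step is the key identification: the resulting expression is exactly $A^{\p}\bigl(\d\{x_1,\ldots,x_{n+2}\}\bigr)$ when one reads off the right-hand side of formula (\ref{explid2}) for $\d$ (with $n$ shifted to $n+1$). The summation range $i<j$, $i\le n$ in (\ref{defblp}) matches the sum in (\ref{explid2}), and the term $(-1)^n\{x_1,\ldots,x_n,[x_{n+1},x_{n+2}]+[x_{n+2},x_{n+1}]\}$ in $\d$ is precisely what supplies the symmetrized boundary term in $b_{LP}$. Thus
\[
(b_{LP}A)(x_1,\ldots,x_{n+2})
= A^{\p}\c\d\,\{x_1,\ldots,x_{n+2}\},
\]
so setting $B^{\p}:=A^{\p}\c\d$ verifies the hypothesis of Lemma \ref{impdef} for $b_{LP}A$, showing $b_{LP}A\in ALP^{n+1}(\g)$.

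The whole argument is really just the dictionary between (\ref{explid2}) and (\ref{defblp}), so the main technical point is purely bookkeeping: matching the signs $(-1)^{i-1}$ and the terminal $(-1)^n$ and making sure the index shift from "$\d$ on $(n+1)$-fold brackets" to "$\d$ on $(n+2)$-fold brackets" is handled correctly. No obstacle of substance arises, but as a byproduct one obtains the promised statement of the introduction: on $ALP^{\bullet}(\g)$, the coboundary $b_{LP}$ is literally the transpose of $\d$ on $F_{Lie}\g[1]$, so the cohomology of $(ALP^{\bullet}(\g),b_{LP})$ is dual to $HA_{\bullet-1}(\g)$.
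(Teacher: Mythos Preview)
Your proposal is correct and follows essentially the same route as the paper: you invoke Lemma \ref{impdef} to write $A=A^{\p}\{\cdot\}$, expand $b_{LP}A$ via (\ref{defblp}), and then recognize the result term-by-term as $A^{\p}\d\{x_1,\ldots,x_{n+2}\}$ using (\ref{explid2}), concluding $(b_{LP}A)^{\p}=A^{\p}\d$. The only extra content in your write-up is the final duality remark, which the paper states only in special cases ($HA^{0}$ and $HA^{1}$); as a blanket statement it requires a finiteness hypothesis, so you may want to qualify it.
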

\begin{proof}
Suppose that $A(x_{1},...,x_{n+1})$ is an ac n-cochain.
From (\ref{defblp}) and the assumption,
\begin{multline*}
(b_{LP}A)(x_{1},...,x_{n+1},x_{n+2})=
(-1)^{n}
A(x_{1},...,x_{n},[x_{n+1},x_{n+2}]+[x_{n+2},x_{n+1}])+\\
\sum_{\substack{i<j \\ i\le n}}(-1)^{i-1}
A(x_{1},...,x_{i}^{\vee},...,
[x_{i},x_{j}],x_{j+1},...,x_{n+2})=\\
=(-1)^{n}
A^{\p}\{x_{1},...,x_{n},[x_{n+1},x_{n+2}]+[x_{n+2},x_{n+1}]\}+\\
\sum_{\substack{i<j \\ i\le n}}(-1)^{i-1}
A^{\p}\{x_{1},...,x_{i}^{\vee},...,
[x_{i},x_{j}],x_{j+1},...,x_{n+2}\},
\end{multline*}
on the other hand, from (\ref{explid2}),
the right-hand side is equal to
$(A^{\p}\d)\{x_{1},...,x_{n+2}\}$.
Hence we obtain
$$
(b_{LP}A)(x_{1},...,x_{n+2})
=(A^{\p}\d)\{x_{1},...,x_{n+2}\},
$$
which yields the theorem, i.e.,
$(b_{LP}A)^{\p}=(A^{\p}\d)$.
\end{proof}
Denote by $HA^{\bullet}(\g):=H^{\bullet}(ALP(\g),b_{LP})$
the cohomology group of anti-cyclic cochains.
The space of ac 0-cochains is equal to the dual space
$\g^{*}:=\Hom(\g,\mathbb{K})$.
When $A$ is an ac 0-cochain, then
$b_{LP}A=0$ if and only if $A=0$ on the ideal $I$.
Hence $HA^{0}(\g)=I^{\bot}\cong\g_{Lie}^{*}=(HA_{0}\g)^{*}$.
When $\g$ is Lie,
if $A$ is an ac 1-cocycle, then
$$
A([x,y],z)+A(y,[x,z])=A(z,[x,y])-A([z,x],y)=0.
$$
Hence $HA^{1}(\g)=(HA_{1}\g)^{*}=(\Omega^{0}\g)^{*}$.
We here prove a classical theorem.
Let $\g$ be a finite dimensional Leibniz algebra.
We consider a subclass of Abelian extensions of
$\g$ by $\g^{*}$ such that\\
(i) the Leibniz algebra
of the middle position, $\g\oplus\g^{*}$,
satisfies (\ref{A1})-(\ref{A2}) with respect to $\omega$,\\
(ii) the isomorphisms between extensions
preserve $\omega$.
\begin{theorem}
Such extensions are classified into $HA^{2}(\g)$.
\end{theorem}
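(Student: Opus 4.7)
The plan is to set up the familiar dictionary between equivalence classes of Abelian extensions and a second cohomology, but restricted by the two conditions (i) and (ii) to land inside the anti-cyclic subcomplex $ALP^{\bullet}(\g)$.

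First, I would parametrize. Any Abelian extension of $\g$ by $\g^{*}$ with the prescribed coadjoint action has underlying space $\g\oplus\g^{*}$ and bracket
\[
[x_{1}\oplus a_{1},x_{2}\oplus a_{2}]=[x_{1},x_{2}]\oplus[x_{1},a_{2}]+[a_{1},x_{2}]+H(x_{1},x_{2}),
\]
for some $H\in LP^{2}(\g,\g^{*})$, and the Leibniz identity on $\g\oplus\g^{*}$ is equivalent to $d_{LP}H=0$. This standard step does not involve $\omega$.

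Second, I would translate condition (i). The observation already recorded in the section shows that the extension bracket satisfies (\ref{A1})-(\ref{A2}) with respect to $\omega$ if and only if $\widetilde{H}(x_{1},x_{2},x_{3}):=\omega(H(x_{1},x_{2}),x_{3})$ satisfies the two symmetries (\ref{H1})-(\ref{H2}), that is, $\widetilde{H}\in ALP^{2}(\g)$. Combined with the identity $b_{LP}\widetilde{H}=(-1)^{2}\widetilde{d_{LP}H}$ recalled after (\ref{defblp}), the cocycle condition $d_{LP}H=0$ is exactly $b_{LP}\widetilde{H}=0$ inside $ALP^{3}(\g)$. Hence extensions satisfying (i) correspond bijectively to $2$-cocycles of the anti-cyclic subcomplex.

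Third, I would unwind the equivalence relation dictated by (ii). A morphism between two such extensions which is the identity on the subalgebra $\g^{*}$ and on the quotient $\g$ must be of the form $\Phi(x,a)=(x,a+\phi(x))$ for a linear $\phi\colon\g\to\g^{*}$. Requiring $\Phi^{*}\omega=\omega$ (condition (ii)) is equivalent to $\widetilde{\phi}(x_{1},x_{2}):=\omega(\phi(x_{1}),x_{2})$ being symmetric, i.e.\ $\widetilde{\phi}\in ALP^{1}(\g)$, and requiring $\Phi$ to intertwine the two brackets yields $H-H'=d_{LP}\phi$, which via the tilde correspondence becomes $\widetilde{H}-\widetilde{H'}=\pm b_{LP}\widetilde{\phi}$ inside $ALP^{2}(\g)$. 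Assembling the three steps, equivalence classes of such extensions are in bijection with
\[
\Ker\bigl(b_{LP}\colon ALP^{2}(\g)\to ALP^{3}(\g)\bigr)\big/\Im\bigl(b_{LP}\colon ALP^{1}(\g)\to ALP^{2}(\g)\bigr)=HA^{2}(\g).
\]

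The main technical obstacle is the second step: checking that each axiom of anti-invariance for the extension bracket that involves $H$ reduces cleanly to one of (\ref{H1})-(\ref{H2}) on $\widetilde{H}$, with no extra constraint. This relies on the fact that the coadjoint part is already anti-invariant, essentially by the very construction (\ref{CA1})-(\ref{CA2}) of the coadjoint action relative to $\omega$, so the only residual content of (\ref{A1})-(\ref{A2}) is precisely what (\ref{H1})-(\ref{H2}) record. The rest is bookkeeping that has already been prepared by Lemma \ref{impdef} and the subcomplex theorem proved just above.
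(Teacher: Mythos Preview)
Your argument is correct and follows exactly the route the paper takes: the paper's proof is a one-line sketch that only records the key new point---that an isomorphism $1+\tau$ preserves $\omega$ iff $\tilde{\tau}$ is a symmetric tensor, i.e.\ an ac $1$-cochain---while treating the rest of the extension/cocycle dictionary as understood. You have simply written out that dictionary in full, together with the translation $b_{LP}\widetilde{H}=\widetilde{d_{LP}H}$ already stated after (\ref{defblp}) and the equivalence (\ref{H1})--(\ref{H2})$\Leftrightarrow\widetilde{H}\in ALP^{2}(\g)$ recorded before Definition~4.3.
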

\begin{proof}
In general, an isomorphism between
Abelian extensions is given by $e^{\tau}:=1+\tau$,
where $\tau:\g\to\g^{*}$.
This preserves $\omega$ if and only if $\tilde{\tau}$
is a symmetric tensor or ac $1$-cochain.
\end{proof}
%%%%%%%%%%%%%%%%%%%%%%%%%%%
\section{Tensor expression}
%%%%%%%%%%%%%%%%%%%%%%%%%%%
In this section we study a tensor expression
of anti-cyclic cochains.
In the following suppose that $\g$
is a finite dimensional Leibniz algebra.
Let $e_{1},...,e_{\dim\g}$ be a base of $\g$.
The degree of $e_{i}$ is $|e_{i}|=-1$ for each $i$.
If $A$ is an ac 2-cochain on $\g$, then
$A(e_{i},e_{j},e_{k})=A_{ijk}$.
Hence the cochain is expressed as
$A=A_{ijk}e^{i}\ot e^{j}\ot e^{k}$,
where $e^{i}$ is the dual base of $e_{i}$.
The coefficient part, $A_{ijk}$, satisfies
\begin{eqnarray*}
A_{ijk}&=&A_{ikj},\\
A_{ijk}+cyclic&=&0,
\end{eqnarray*}
which is the symmetry that the normalized
Lie bracket $(e_{i},(e_{j},e_{k}))$ satisfies.
Hence the symmetry of
the tensor part, $e^{i}\ot e^{j}\ot e^{k}$,
should be the dual of the one of $(e_{i},(e_{j},e_{k}))$.
We denote such a tensor by $\{e^{i},e^{j},e^{k}\}_{*}$
and call the bracket $\{,...,\}_{*}$ a dual Lie bracket.
In general, the dual Lie bracket is defined as follows
\begin{definition}[dual Lie brackets]
\begin{equation}\label{defduallie}
\{x^{1},...,x^{n}\}_{*}:=x^{1}\ot\{x^{2},...,x^{n}\}_{*}
-(-1)^{n-1}
x^{n}\ot\{x^{1},...,x^{n-1}\}_{*},
\end{equation}
where we put $|x^{i}|:=odd$ or $+1$.
In particular, $\{x^{1}\}_{*}=x^{1}$.
\end{definition}
For example,
$\{x^{1},x^{2}\}_{*}$ is equal to the symmetric tensor
$\{x^{1},x^{2}\}_{*}=x^{1}\ot x^{2}+x^{2}\ot x^{1}$,
the 3-ary bracket is
$$
\{x^{1},x^{2},x^{3}\}_{*}=
x^{1}\ot x^{2}\ot x^{3}+
x^{1}\ot x^{3}\ot x^{2}-
x^{3}\ot x^{1}\ot x^{2}-
x^{3}\ot x^{2}\ot x^{1}.
$$
From (\ref{defduallie})
the total cyclic summation of the dual-Lie bracket is zero.
$$
\oint\{x^{1},...,x^{n}\}_{*}=0,
$$
where $\oint$ is the cyclic permutation for all variables.
An ac n-cochain on $\g$ is expressed by using the
dual Lie bracket as follows.
$$
A=\frac{1}{n+1}
\sum A_{i_{1}...i_{n+1}}\{e^{i_{1}},...,e^{i_{n+1}}\}_{*}.
$$
\begin{definition}[contraction]
If $f$ is a linear function on $\g$,
$$
i_{f}\{x^{1},...,x^{n}\}_{*}
:=f(x^{1})\{x^{2},...,x^{n}\}_{*}
-(-1)^{n-1}f(x^{n})\{x^{1},...,x^{n-1}\}_{*}.
$$
\end{definition}
Our interesting space is not $\g$
but the double space $\g\oplus\g^{*}$.
By analogy with the Lie algebra case,
the notion of Cartan 3-form
is defined by $C(x,y,z):=\omega([x,y],z)$
on $\g\oplus\g^{*}$,
where $[.,.]$ is the Leibniz bracket of $\g\ltimes\g^{*}$.
The structure constant of the Leibniz bracket
is defined by using the Cartan 3-form
$$
C_{ij}^{k}:=C(e_{i},e_{j},e^{k})
=\omega([e_{i},e_{j}],e^{k}).
$$
Denote
$\mu_{Leib}:=C_{ij}^{k}\{e^{i},e^{j},e_{k}\}_{*}$.
Then for any linear functions $f_{1},f_{2}$
on $\g\oplus\g^{*}$,
the Leibniz bracket $[f_{1},f_{2}]$
is computed by
$$
[f_{1},f_{2}]=i_{f_{2}}i_{f_{1}}\mu_{Leib}.
$$
For $\widetilde{H}$ in (\ref{H1})-(\ref{H2}),
denote $H_{ijk}:=\widetilde{H}(e_{i},e_{j},e_{k})$.
Then
$\widetilde{H}=
\frac{1}{3}H_{ijk}\{e^{i},e^{j},e^{k}\}_{*}$.
Therefore, the total structure with
the twisting term $\widetilde{H}$ is
expressed as follows.
$$
\theta_{Leib}:=C_{ij}^{k}\{e^{i},e^{j},e_{k}\}_{*}+
\frac{1}{3}H_{ijk}\{e^{i},e^{j},e^{k}\}_{*}.
$$
Classical structures for Lie algebras
are expressed by using the wedge product,
$$
\theta_{Lie}=\frac{1}{2}
C_{ij}^{k}e^{i}\wedge e^{j}\wedge e_{k}
+\frac{1}{6}H_{ijk}e^{i}\wedge e^{j}\wedge e^{k}.
$$
On the other hand in the Leibniz world,
the structure tensors are expressed by
the dual Lie bracket in stead of the wedge product.

\begin{center}
\textbf{Appendix --Proof of Lemma \ref{impdef}--}
\end{center}
Denote by $\{,\}$ the map of higher bracketing
$$
\{,\}:x_{1}\ot\cdots\ot x_{n}\mapsto\{x_{1},...,x_{n}\}.
$$
Then the following identity holds.
\begin{equation}\label{AX1}
\{,\}\epsilon\{,\}=n\{,\},
\end{equation}
where $n$ is the length of word.
The lemma is a consequence of this identity,
hence we prove (\ref{AX1}).\\
\indent
When $n=1,2$ the identity obviously holds.
By the Jacobi identity,
in general, the normalized bracket satisfies
\begin{equation}\label{AX2}
\{,...,\epsilon\{,...,\},...,x_{f}\}
=\{,...,\{,...,\},...,x_{f}\},
\end{equation}
where $x_{f}$ is the fixed variable
which lies the most right position.
For example,
$\epsilon\{x_{1},x_{2}\}=x_{1}\ot x_{2}+x_{2}\ot x_{1}$
and
$$
\{\epsilon\{x_{1},x_{2}\},x_{3}\}=
\{x_{1},x_{2},x_{3}\}+\{x_{2},x_{1},x_{3}\}=
\{\{x_{1},x_{2}\},x_{3}\}.
$$
From the definition,
\begin{equation}\label{AX3}
\epsilon\{x_{1},...,x_{n+1}\}=x_{1}\ot\epsilon\{x_{2},...,x_{n+1}\}
-(-1)^{n}\epsilon\{x_{2},...,x_{n+1}\}\ot x_{1}.
\end{equation}
Applying $\{,\}$ to (\ref{AX3}), we obtain
$$
\{,\}\epsilon\{x_{1},...,x_{n+1}\}=
\{x_{1},\{\}\epsilon\{x_{2},...,x_{n+1}\}\}
-(-1)^{n}\{\epsilon\{x_{2},...,x_{n+1}\},x_{1}\}.
$$
By the assumption of induction, the first term is
equal to $n\{x_{1},...,x_{n}\}$
and by (\ref{AX2}) the second term is equal to
$\{x_{1},...,x_{n+1}\}$.
Hence
$$
\{,\}\epsilon\{x_{1},...,x_{n+1}\}
=(n+1)\{x_{1},...,x_{n+1}\}.
$$
The proof is completed.

\begin{verbatim}
Kyousuke UCHINO
email:kuchinon@gmail.com
\end{verbatim}
\end{document}